\newcommand\tsum{\textstyle\sum\nolimits}
\title[Manin's conjecture and generically finite covers]{Manin's Conjecture and the Fujita invariant of finite covers}
\author{Akash Kumar Sengupta}
\date{}
\newtheoremstyle{common}
{6pt plus 5pt minus 2pt}% above space (default)
{6pt plus 5pt minus 2pt}% below space
{\normalfont}% body font
{0em}% indent
{\bfseries}% head font
{}% punct after head
{.5em}% space
{}% custom
\theoremstyle{common}
\newtheorem{thm}{Theorem}[section]
\newtheorem{lem}[thm]{Lemma}
\newtheorem{prop}[thm]{Proposition}
\newtheorem{defn}[thm]{Definition}
\newtheorem{conj}[thm]{Conjecture}
\newtheorem{cor}[thm]{Corollary}
\begin{document}
	
	\maketitle
	\begin{abstract}
		We prove a conjecture of Lehmann-Tanimoto about the behaviour of the Fujita invariant (or $a$-constant appearing in Manin's conjecture) under pull-back to generically finite covers. As a consequence we obtain results about geometric consistency of Manin's conjecture.
	\end{abstract}
	
	\section{Introduction}
	Let $X$ be a smooth projective variety over a field of $k$ of characteristic $0$ and $L$ a big $\mathbb{Q}$-Cartier $\mathbb{Q}$-divisor on $X$. Let $\Lambda_{\mathrm{eff}}(X) \subset \mathrm{NS}(X)_{\mathbb{R}}$ be the cone of pseudo-effective divisors. The Fujita invariant or the $a$-constant is defined as  
	\[a(X,L)= \mathrm{min}\{t\in \mathbb{R}| [K_X]+t[L] \in \Lambda_{\mathrm{eff}}(X)\}\]
	The invariant $\kappa\epsilon(X,L)=-a(X,L)$ was introduced and studied by Fujita under the name Kodaira energy in [Fuj87], [Fuj92], [Fuj96] and [Fuj97]. By [BDPP13], we know that $a(X,L)>0$ if and only if $X$ is uniruled.\\
	
	The $a$-constant was introduced in the context of Manin's conjecture in [FMT89] and [BM90]. Manin's conjecture predicts that the asymptotic behaviour of the number of rational points on Fano varieties over number fields is governed by certain geometric invariants (the $a$ and $b$-constants). In [LTT16], motivated by Manin's conjecture, the authors studied the behaviour of the $a$-constant under restriction to subvarieties. We know (by [LTT16], [HJ16]) that if $X$ is uniruled and $L$ is a big and nef line bundle then there exists a proper closed subscheme $V\subset X$ such that any subvariety $Z \subset X$ satisfying $a(Z,L|_Z)>a(X,L)$ is contained in $V$. In [LT17], a similar finiteness statement was conjectured about the behaviour of the $a$-constant under pull-back to generically finite covers and in this paper we confirm the conjecture. In particular we prove the following:
	
	\begin{thm} (see [LT17, Conj. 1.7])
		Let $X$ be a smooth projective uniruled variety and $L$ a big and nef $\mathbb{Q}$-divisor. Then, upto birational equivalence, there are only finitely many generically finite covers $f:Y \longrightarrow X$ such that $a(Y,f^*L)=a(X,L)$ and $\kappa(K_Y+a(Y,f^*L)f^*L)=0$.
	\end{thm}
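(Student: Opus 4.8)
The plan is to reduce the two hypotheses to a single cohomological condition on the ramification divisor, extract from it a rigidity and boundedness statement for the covers, and then conclude by the finiteness of finite-index subgroups in a finitely generated group.

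\emph{Reductions.} Since, for a big $\mathbb{Q}$-divisor, both $a(Y,\,\cdot\,)$ and the Iitaka dimension of the corresponding adjoint divisor depend only on the extension $k(Y)/k(X)$, I may pass to an arbitrary smooth projective model of $Y$ and fix once and for all a smooth projective model of $X$; Stein factorization and resolution then reduce the statement to the case of a finite morphism $f\colon Y\to X$ of smooth projective varieties. Set $M_X:=K_X+a(X,L)L$, a pseudo-effective $\mathbb{Q}$-divisor lying on the boundary of $\Lambda_{\mathrm{eff}}(X)$, and let $R:=K_Y-f^*K_X\ge 0$ be the ramification divisor, so that $K_Y+a(X,L)f^*L\sim_{\mathbb{Q}}f^*M_X+R$. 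As $f^*M_X$ is pseudo-effective and $R$ is effective, one always has $a(Y,f^*L)\le a(X,L)$; and if $\kappa(f^*M_X+R)=0$ then $f^*M_X+R$ is not big, hence $f^*M_X+R-\varepsilon f^*L$ is pseudo-effective for no $\varepsilon>0$, which forces $a(Y,f^*L)=a(X,L)$. Therefore the two conditions in the theorem are together equivalent to the single condition $\kappa(f^*M_X+R)=0$, and it suffices to show that, up to birational equivalence, only finitely many finite covers $f\colon Y\to X$ satisfy it.

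\emph{A rigidity constraint.} Decomposing $f^*M_X+R$ via its divisorial Zariski decomposition, the equality $\kappa(f^*M_X+R)=0$ forces the positive part $P_\sigma(f^*M_X+R)$ to vanish; since $N_\sigma$ is subadditive on pseudo-effective classes, it follows that $P_\sigma(f^*M_X)=0$ and $P_\sigma(R)=0$ as well. Because $P_\sigma$ (and hence the numerical dimension) is preserved under finite pullback, this yields $\nu(M_X)=0$ — so the set in the theorem is empty unless $M_X$ has numerical dimension $0$ — and it shows that the ramification divisor $R$ is numerically rigid. After one more birational modification I may thus assume $M_X$ is numerically equivalent to a fixed effective divisor $D_X=N_\sigma(M_X)$; let $W_0\subset X$ be the fixed proper closed subset which is the union of $\mathrm{Supp}(D_X)$ and the exceptional locus of the chosen model.

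\emph{Boundedness.} The crux is to prove that the finite covers $f$ with $\kappa(f^*M_X+R)=0$ have bounded degree and branch divisor contained in $W_0$. For the branch locus: were a prime divisor $B_0\not\subset W_0$ a component of the branch locus of $f$, the associated component of $R$ would lie off the non-nef locus of $f^*M_X$ and would contribute strict positivity, contradicting $P_\sigma(f^*M_X+R)=0$; making this rigorous amounts to comparing $R$ with the divisorial part of the augmented non-nef locus of $f^*M_X$, crucially using that $f^*L$ is nef. For the degree: since $X$ is uniruled, by [LTT16] and [HJ16] there is a dominant family of rational curves $C$ on $X$ with $L\cdot C$ bounded and $M_X\cdot C=0$; restricting $f$ over a general member $C$ (which meets $W_0$ in a bounded number of points) and combining Riemann--Hurwitz with the numerical rigidity of $R$ — and, where needed, with positivity of the sheaves $f_*\omega_{Y/X}^{\otimes m}$ — bounds the degree of each component of $f^{-1}(C)\to C$, hence $\deg f$, in terms of $X$ and $L$ alone. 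I expect this step, and especially the localization of the branch loci inside a fixed divisor, to be the principal obstacle; it plays the role here that the hard part of the restriction theorem of [LTT16] plays there.

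\emph{Conclusion.} Once $\deg f$ is bounded by some $d_0$ and every branch divisor is contained in $W_0$, each cover under consideration is, up to birational equivalence, the normalization of $X$ in the function field of a connected finite étale cover of $X\setminus W_0$ of degree at most $d_0$; such covers correspond to subgroups of index at most $d_0$ in $\pi_1(X\setminus W_0)$. Since $X\setminus W_0$ is a smooth quasi-projective variety, its fundamental group is finitely generated, and a finitely generated group possesses only finitely many subgroups of each finite index. Hence only finitely many such covers exist, proving the theorem.
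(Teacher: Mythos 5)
Your reductions and the final counting argument agree in spirit with the paper, and the observation that the two hypotheses combine into $\kappa(f^*M_X+R)=0$ (hence, via BCHM-type abundance for klt pairs with big boundary, into $P_\sigma(f^*M_X+R)=0$, and then $P_\sigma(f^*M_X)=P_\sigma(R)=0$ by superadditivity of $P_\sigma$) is a legitimate and attractive reformulation. But the ``Boundedness'' step, which you yourself flag as the principal obstacle, is a genuine gap, and the sketch you offer there does not close it. The claim that every branch component $B_0$ must lie in $W_0=\mathrm{Supp}(N_\sigma(M_X))$ is not established by the slogan ``a component of $R$ off the non-nef locus of $f^*M_X$ contributes strict positivity, contradicting $P_\sigma(f^*M_X+R)=0$'': the divisor $f^*M_X+R$ being $N_\sigma$-rigid is compatible with $R$ having components disjoint from $\mathrm{Supp}(f^*N_\sigma(M_X))$, since a rigid effective class can have arbitrary irreducible support. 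Nothing in the $\sigma$-decomposition by itself rules out a branch divisor outside $W_0$; what one really needs is a statement about the Fujita invariant of the branch divisor. Likewise the proposed degree bound via Riemann--Hurwitz over a general member of a dominant family of rational curves with $M_X\cdot C=0$ is not carried out and would require controlling both the number of components of $f^{-1}(C)$ and the ramification profile over $C$, neither of which follows from ``numerical rigidity of $R$.''

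The paper closes exactly this gap by two devices that your sketch is missing. First, it proves (Proposition 2.7) that if a divisor $E$ is contracted by a $K_X+\Delta$-MMP with $\Delta$ nef, then $K_{\widetilde E}+\pi^*(\Delta|_E)$ is not pseudo-effective; applied to the ramification components $R$ of $f$ (which \emph{are} contracted, since $\kappa(K_Y+af^*L)=0$ forces the minimal model to have $K+\Delta\equiv 0$) this yields $a(R,af^*L|_R)>1$, hence $a(B,L|_B)>a(X,L)$. Then the Hacon--Jiang theorem (Theorem 2.3) puts all such $B$ inside a \emph{fixed} closed set $V\cup\mathbf B_+(L)$ — which is in general strictly larger than $\mathrm{Supp}(N_\sigma(M_X))$, so your $W_0$ is the wrong target. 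Second, for the degree bound the paper does not argue over rational curves but invokes Birkar's BAB theorem: the good minimal model $(Y',\Delta')$ of $(Y,\Delta\sim_\mathbb{R}af^*L)$ has $K_{Y'}+\Delta'\equiv 0$, a small perturbation makes $Y'$ $\tfrac12$-klt log Fano, so $\mathrm{vol}(-K_{Y'})=\mathrm{vol}(\Delta')\ge\mathrm{vol}(\Delta)=a^n(\deg f)\mathrm{vol}(L)$ is uniformly bounded. You should either import these two ingredients or supply a complete replacement; as written, the argument does not prove the theorem.
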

	The conjecture was proved in the case of $\mathrm{dim}(X)=2$ in  [LT17]. The authors also showed that the conjecture holds for Fano threefolds $X$ and $L=-K_X$ if $\mathrm{index}(X)\geq 2$ or if $\rho(X)=1$, $\mathrm{index}(X)=1$ and $X$ is general in its moduli. Their idea was to reduce the statement to the finiteness of the \'{e}tale fundamental groups of log Fano varieties. In this paper we take a different approach to prove the conjecture in general. It follows from the boundedness results in [Bir16] that the degree of morphisms $f:Y\longrightarrow X$ satisfying the hypothesis of Theorem 1.1 is bounded. Therefore it is enough to show that the branch divisors of all such morphisms are contained in a fixed proper closed subscheme $V\subsetneq X$. We show that if $B\subset X$ is component of the branch divisor and $B \not\subset \mathbf{B}_{+}(L)$, then $a(B,L|_B)>a(X,L)$. Here $\mathbf{B}_{+}(L)$ is a closed subset of $X$ such that $L|_B$ is big for any subvariety $B\not\subset \mathbf{B}_{+}(L)$ ([Laz04]). Therefore, by [HJ16], there is a fixed (depending only on $X$ and $L$) closed subscheme $V\subsetneq X$ such that $B\subset V \cup \mathbf{B}_{+}(L)$.\\
	
	We note that the converse of the statement above is not true. In particular, if $B$ is a subvariety of $X$ with $a(B,L|_B)>a(X,L)$ then there might not exist a generically finite cover satisfying the hypothesis of Theorem 1.1 such that $B$ is contained in the branch divisor. For example, let $X=\mathrm{Bl}_{p}\mathbb{P}^2$ be the blow-up of $\mathbb{P}^2$ at a point $p$ and $B=E$ the exceptional divisor. Let $L=-K_X$. Then we have $a(X,L)=1$ and $a(B,L)=2$. By [LT17, Theorem 6.2], $X$ does not admit any generically finite cover $f:Y\longrightarrow X$ satisfying $a(Y,f^*L)=a(X,L)$ and $\kappa(K_Y+a(Y,f^*L)f^*L)=0$.\\

	As a consequence of Theorem 1.1, we can obtain a statement about geometric compatibility of Manin's conjecture. Let us recall the $b$-constant, which is the other geometric invariant involved in Manin's conjecture. The $b$-constant is defined as (cf. [FMT89], [BM90])
	\[ b(X,L) = \mathrm{codim \hspace*{0.1cm} of \hspace*{0.1cm}minimal\hspace*{0.1cm} supported\hspace*{0.1cm} face\hspace*{0.1cm} of }\hspace*{0.1cm} \Lambda_{\mathrm{eff}}(X)\hspace*{0.1cm}\] \[\mathrm{containing \hspace*{0.1cm} the\hspace*{0.1cm} class\hspace*{0.1cm} of\hspace*{0.1cm} }K_X+a(X,L)L\]
	
	In [FMT89] and [BM90], it was conjectured that the $a$ and $b$-constants control the count of rational points on Fano varieties over a number field. The following version was suggested by Peyre in [Pey03] and later stated in [Rud14], [BL15].\\
	
	{\bf Manin's conjecture:} Let $X$ be a Fano variety defined over a number field $F$ and $\mathcal{L}=(L,||.||)$ a big and nef adelically metrized line bundle on $X$ with associated height function $H_{\mathcal{L}}$. Then there exists a thin set $Z \subset X(F)$ such that  
	\[\#\{x\in X(F)-Z| H_{\mathcal{L}}(x)\leq B\} \sim c(F,X(F)\setminus Z,\mathcal{L})B^{a(X,L)}(\mathrm{log}B)^{b(X,L)-1}\]
	as $B\rightarrow \infty$. \\
	
	Recall that a thin subset of $X(F)$ is a finite union $\cup_i\pi_i(Y_i(F))$ where $\pi_i:Y_i\longrightarrow X$ is a morphism generically finite onto its image and admits no rational section. Initially Manin's conjecture was stated for closed subsets instead of thin subsets (see [BM90], [Pey95], [BT98]). But it turned out that the closed set version is false (see [BT96],[Rud14]). The counterexamples arise from the existence of generically finite morphisms $f:Y \longrightarrow X$ such that 
	\[(a(Y,f^*L),b(Y,f^*L))>(a(X,L),b(X,L))
	\]
	in the lexicographic order.\\
	
	In [LT17], it was conjectured that such geometric incompatibilities can not obstruct the thin set version of Manin's conjecture. 
	
	\begin{conj}(see [LT17, Conj1.1]) Let $X$ be a smooth projective uniruled variety over a number field $F$. Consider all $F$-morphisms $f:Y\longrightarrow X$ where $Y$ is a smooth projective variety and $f$ is generically finite. Then, as we vary over all such morphisms $f$ such that $f^*L$ is not big or 
		\[(a(Y,f^*L),b(Y,f^*L))>(a(X,L),b(X,L))
		\]
		in the lexicographic order, the points in $\cup_f f(Y(F))$ are contained in a thin subset $Z\subset X$.
	\end{conj}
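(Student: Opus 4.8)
The plan is to deduce Conjecture 1.2 from Theorem 1.1, together with the $a$-invariant results of [LTT16], [HJ16] for subvarieties and the structural reductions of [LT17]. We may assume $L$ is big and nef, as in Theorem 1.1, and we argue by induction on $\dim X$. Call a generically finite $F$-morphism $f\colon Y\to X$ with $Y$ smooth projective \emph{breaking} if $f^{*}L$ is not big, or $(a(Y,f^{*}L),b(Y,f^{*}L))>(a(X,L),b(X,L))$ in the lexicographic order; we must show that $\bigcup_{f}f(Y(F))$, the union over all breaking $f$, is contained in a thin set. Observe that a breaking $f$ admits no rational section: if $f$ is non-dominant this is clear, and if $f$ is dominant of degree $1$ then $f$ is birational, so $(a,b)$ is unchanged and $f$ is not breaking --- hence dominant breaking maps have degree $\geq 2$. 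Consequently each individual set $f(Y(F))$ is thin, and the real task is to reduce, up to birational equivalence, to finitely many breaking $f$ together with a controlled remainder.

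Consider first a dominant breaking map $f\colon Y\to X$. From $K_{Y}=f^{*}K_{X}+R_{f}$ with $R_{f}$ effective we get $K_{Y}+tf^{*}L=f^{*}(K_{X}+tL)+R_{f}$, and since pullback along the surjection $f$ preserves pseudoeffectivity, $K_{X}+tL\in\Lambda_{\mathrm{eff}}(X)$ forces $K_{Y}+tf^{*}L\in\Lambda_{\mathrm{eff}}(Y)$; hence $f^{*}L$ is big and $a(Y,f^{*}L)\leq a(X,L)$, so being breaking forces $a(Y,f^{*}L)=a(X,L)$ and $b(Y,f^{*}L)>b(X,L)$. In particular $K_{Y}+a(Y,f^{*}L)f^{*}L$ is not big, since otherwise $b(Y,f^{*}L)=0<b(X,L)$. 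If moreover $\kappa\big(K_{Y}+a(Y,f^{*}L)f^{*}L\big)=0$, then $f$ satisfies the hypotheses of Theorem 1.1, so up to birational equivalence there are only finitely many such $f$, and their contributions form a finite union of thin sets. If instead $\kappa\big(K_{Y}+a(Y,f^{*}L)f^{*}L\big)>0$, I would pass to the Iitaka fibration $\pi\colon Y'\to T$ of this adjoint divisor on a birational model: a general fibre $F$ then satisfies $0<\dim F<\dim X$, by adjunction $\big(K_{Y'}+a(Y,f^{*}L)f^{*}L\big)\big|_{F}=K_{F}+a(X,L)(f|_{F})^{*}L$ has Iitaka dimension $0$, and since $(f|_{F})^{*}L$ is big on $F$ for general $t$ (the images $\overline{f(F)}$ sweep out $X$, so are not contained in $\mathbf{B}_{+}(L)$) one deduces $a\big(F,(f|_{F})^{*}L\big)=a(X,L)$. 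Thus $f|_{F}$ is a breaking map onto the proper subvariety $\overline{f(F)}\subsetneq X$ with vanishing adjoint Iitaka dimension, and this positive-dimension case reduces, following [LT17], to the non-dominant and lower-dimensional analysis below.

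Now suppose $f\colon Y\to X$ is a non-dominant breaking map, with $W:=\overline{f(Y)}\subsetneq X$. Choosing a resolution $\mu\colon\widetilde W\to X$ of $W$ and factoring $f=\mu\circ g$, after a birational modification of $Y$, with $g$ dominant generically finite, we again obtain $a(Y,f^{*}L)\leq a(\widetilde W,L|_{\widetilde W})$, and $f^{*}L$ is big precisely when $L|_{\widetilde W}$ is. If $f^{*}L$ is not big then $W\subset\mathbf{B}_{+}(L)$, while if $a(Y,f^{*}L)>a(X,L)$ then $a(\widetilde W,L|_{\widetilde W})>a(X,L)$, so $W$ lies in the proper closed subset $V\subsetneq X$ furnished by [LTT16], [HJ16]; in both cases the images lie in the proper closed subset $\mathbf{B}_{+}(L)\cup V$, whose $F$-points form a thin set. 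The remaining non-dominant breaking maps have $a(Y,f^{*}L)=a(X,L)$, $b(Y,f^{*}L)>b(X,L)$, $L|_{\widetilde W}$ big, and hence $a(\widetilde W,L|_{\widetilde W})=a(X,L)$; here one invokes the structure theory for subvarieties whose $(a,b)$-invariants are at least $(a(X,L),b(X,L))$, which --- as in [LT17] --- either confines such $W$ to a proper closed subset (again giving a thin set) or exhibits them as members of a bounded family covering $X$, whose contribution is then controlled by the inductive hypothesis applied to the members of the family together with an analysis of its total space mapping to $X$.

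The step I expect to be the main obstacle is precisely this last combination. Theorem 1.1 directly bounds only the dominant covers $f$ with $\kappa\big(K_{Y}+a(Y,f^{*}L)f^{*}L\big)=0$, and the reductions above must then show that non-dominant covers, dominant covers of positive adjoint Iitaka dimension, and covering families of subvarieties with maximal $(a,b)$-invariants are all swallowed by a single thin set. As long as the relevant images lie in a fixed proper closed subset of $X$, the $a$-invariant results of [LTT16], [HJ16] and the induction on $\dim X$ handle the situation; the delicate case is a bounded family of subvarieties (or Iitaka fibres) sweeping out all of $X$, where the argument must rest on the fibration parametrising the family having no rational section. Granting this structural input in the spirit of [LT17], Theorem 1.1 is the essential new ingredient, and the induction on dimension closes the argument.
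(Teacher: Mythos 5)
This statement is \emph{Conjecture 1.2}, which the paper does not prove: it is quoted from [LT17] as an open conjecture, and the paper explicitly settles only a partial case, namely Corollary 1.3 (which adds the hypotheses $\rho(X)=\rho(\overline X)$, $\kappa(K_X+a(X,L)L)=0$, and restricts to covers $f$ with $\kappa(K_Y+a(Y,f^*L)f^*L)=0$). So there is no ``paper's own proof'' to compare against; the relevant question is whether your sketch actually closes the conjecture, and it does not.

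Two concrete gaps. First, your finiteness step is wrong over the base field $F$: you write that once $\kappa(K_Y+a(Y,f^*L)f^*L)=0$, ``up to birational equivalence there are only finitely many such $f$,'' citing Theorem~1.1. But Theorem~1.1 is a statement over an algebraically closed field (or, equivalently, about the geometric models $\overline f:\overline Y\to\overline X$). Over $F$ a single geometric cover has infinitely many non-isomorphic twists, each a legitimate $F$-morphism, so finiteness fails; the paper is explicit about this and invokes the twist-thinness result (Theorem~2.16 from [LT17]) to repair it — and that result in turn requires $\rho(X)=\rho(\overline X)$, which Conjecture~1.2 does not grant you. Second, and more fundamentally, your treatment of dominant covers with $\kappa(K_Y+a(Y,f^*L)f^*L)>0$ and of covering families of subvarieties with maximal $(a,b)$ is not a proof: you pass to the Iitaka fibration, extract a general fiber, and then ``reduce, following [LT17], to the non-dominant and lower-dimensional analysis,'' and later you explicitly defer to ``structural input in the spirit of [LT17]'' for the covering-family case. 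These are precisely the cases that remain open; [LT17] establishes thinness for the subvariety case only under the extra hypotheses $\rho(X)=\rho(\overline X)$ and $\kappa(K_X+a(X,L)L)=0$, and no argument in this paper or in [LT17] disposes of dominant covers of positive adjoint Iitaka dimension in general. You have correctly identified where the difficulty lies, but identifying it is not the same as resolving it: what your outline actually delivers, after patching the twist issue, is the paper's Corollary~1.3, not Conjecture~1.2.
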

	In [LT17], it was shown that the thinness statement of Conjecture 1.2 holds, when $f$ is varied over the inclusion morphism of subvarieties $Y\hookrightarrow X$ and $\rho(X)=\rho(\overline{X})$ and $\kappa(K_X+a(X,L)L)=0$. However the finiteness statement of Theorem 1.1 does not hold over number fields due to the presence of twists. But it was proved in [LT17] that, for a fixed generically finite $F$-cover $f:Y \longrightarrow X$ satisfying $\kappa(K_Y+a(X,L)L)=0$, if we vary $f^{\sigma}:Y^{\sigma}\longrightarrow X$ over all the twists of $f$, then the rational points, contributed by the twists satisfying the hypothesis of Conjecture 1.1, are contained in a thin set. Therefore as a consequence of Theorem 1.1, we obtain the following partial result towards Conjecture 1.2:
	\begin{cor}
		Let $X$ be a smooth uniruled variety over a number field $F$ such that $\rho(X)=\rho(\overline{X})$. Let $L$ be a big and nef $\mathbb{Q}$-divisor on $X$ with $\kappa(K_X+a(X,L)L)=0$. If we vary over all generically finite $F$-covers $f:Y\longrightarrow X$ with $\kappa(K_Y+a(Y,f^*L)f^*L)=0$ such that \[(a(Y,f^*L),b(Y,f^*L))>(a(X,L),b(X,L))
		\]
		then the set of rational points $\cup_f f(Y(F))$ are contained in a thin subset $Z\subset X$.\\
	\end{cor}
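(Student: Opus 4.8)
The plan is to reduce the statement to Theorem 1.1 combined with the analysis of twists carried out in [LT17]. The first observation is the elementary inequality $a(Y,f^*L)\le a(X,L)$, valid for every generically finite cover $f\colon Y\to X$: since $L$ is big and nef and $f$ is surjective, $f^*L$ is big and nef, and writing $K_Y\sim_{\mathbb Q}f^*K_X+R$ with $R\ge 0$ the ramification divisor, the class $K_Y+a(X,L)f^*L=f^*\!\big(K_X+a(X,L)L\big)+R$ is a sum of a pull-back of a pseudo-effective class and an effective class, hence pseudo-effective, so that $a(Y,f^*L)\le a(X,L)$. Therefore the hypothesis $(a(Y,f^*L),b(Y,f^*L))>(a(X,L),b(X,L))$ in the lexicographic order forces $a(Y,f^*L)=a(X,L)$ (and $b(Y,f^*L)>b(X,L)$), so every cover $f$ over which we range satisfies the hypotheses $a(Y,f^*L)=a(X,L)$ and $\kappa\big(K_Y+a(Y,f^*L)f^*L\big)=0$ of Theorem 1.1.

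Next I would pass to the algebraic closure. Both $a(\cdot,\cdot)$ and the Iitaka dimension $\kappa(\cdot)$ are unchanged by the base extension $F\subset\overline F$ and are birational invariants of the relevant pairs, so Theorem 1.1 applied to $\overline X,\overline L$ produces finitely many generically finite covers $g_1,\dots,g_m$ of $\overline X$ such that for every $f\colon Y\to X$ in our family the base change $\overline f$ is birationally equivalent over $\overline X$ to some $g_i$. After replacing each $f$ by a suitable birational model defined over $F$ — which changes $f(Y(F))$ only by a thin set and leaves $a$, $b$, $\kappa$ unchanged — I would arrange that, for each $i$, the $F$-covers in our family lying over $g_i$ are exactly the twists $f_i^{\sigma}\colon Y_i^{\sigma}\to X$ of one fixed $F$-cover $f_i\colon Y_i\to X$ with $a(Y_i,f_i^*L)=a(X,L)$ and $\kappa\big(K_{Y_i}+a(Y_i,f_i^*L)f_i^*L\big)=0$.

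Now I would invoke the twist result of [LT17] recalled above. Since $\rho(X)=\rho(\overline X)$ and $\kappa(K_X+a(X,L)L)=0$, for each $i$ the rational points arising from the twists $f_i^{\sigma}$ that satisfy $(a(Y_i^{\sigma},(f_i^{\sigma})^*L),b(Y_i^{\sigma},(f_i^{\sigma})^*L))>(a(X,L),b(X,L))$ are contained in a thin subset $Z_i\subset X(F)$. Taking $Z$ to be the union of $Z_1,\dots,Z_m$ together with the thin sets produced by the birational modifications above, $Z$ is thin as a finite union of thin subsets, and by construction $\bigcup_f f(Y(F))\subset Z$, which is the assertion.

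I expect the main obstacle to be the middle step: reconciling the purely geometric finiteness of Theorem 1.1, which holds only up to birational equivalence over $\overline X$, with the twist argument of [LT17], which applies to the $\overline F$-forms of a fixed $F$-cover. One must check that, after passing to appropriate birational models over $F$, the $F$-forms lying in a single $\overline F$-birational class constitute — up to a thin set — a single twisting orbit, and that the birational modifications and the base extension $F\subset\overline F$ genuinely leave $a$, $b$, and $\kappa$ unchanged under the standing hypothesis $\rho(X)=\rho(\overline X)$. Once this bookkeeping is settled the remaining steps are formal.
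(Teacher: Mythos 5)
Your overall plan is the same as the paper's: use Lemma 2.4 to force $a(Y,f^*L)=a(X,L)$, base-change to $\overline F$ and invoke Theorem~1.1 for finiteness of $\overline F$-birational classes, then invoke Theorem~2.16 to handle the twists inside each class, and take a finite union of thin sets. However, you explicitly flag — and do not resolve — the middle step: how to pass from finiteness \emph{up to $\overline F$-birational equivalence} to the statement that each $\overline F$-birational class is, up to a thin set, a single twist orbit of a fixed $F$-cover. This is a genuine gap, not just bookkeeping, because twists are defined via an actual $\overline F$-\emph{isomorphism} over $\overline X$, and a priori two $F$-covers that are merely birationally equivalent over $\overline X$ need not be twists of one another.

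The missing idea, which the paper uses, is to replace $f:Y\to X$ by its Stein factorization $\overline f:\overline Y\to X$. Since $\overline Y$ is the normalization of $X$ in the function field of $Y$, it is determined up to $X$-isomorphism by that function field; consequently two $F$-covers whose $\overline F$-base changes are birational over $\overline X$ have Stein factorizations that are actually $\overline F$-\emph{isomorphic} over $\overline X$, i.e.\ genuine twists of each other. Moreover $Y\to\overline Y$ is a proper birational morphism, so $f(Y(F))\subset\overline f(\overline Y(F))$ without any loss, which is cleaner than your ``changes $f(Y(F))$ only by a thin set.'' Finally, Theorem~2.16 is stated for a smooth projective $Y$, and the Stein factorization is in general singular; the paper fixes this, following [LT17, Thm.~1.10], by passing to a resolution on which $\mathrm{Bir}(\overline Y/\overline X)=\mathrm{Aut}(\overline Y/\overline X)$, so that all $F$-forms in a given $\overline F$-isomorphism class are honest twists of a single smooth model. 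Once you insert the Stein-factorization reduction, your argument coincides with the paper's proof.
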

	
	The outline of the paper is as follows. In Section 2.1, we prove the key statements about the $a$-constant. In Section 2.2, we prove the boundedness of the degree of the morphisms in Theorem 1.1. In Section 2.3, we recall the facts about twists of morphisms over a number field. Finally, in section 3, we prove Theorem 1.1 and Corollary 1.3. \\
	
	{\bf Acknowledgements.} I am very grateful to my advisor Professor J\'{a}nos Koll\'{a}r for his constant support, encouragement and fruitful discussions. I am thankful to Professor Brian Lehmann and Professor Sho Tanimoto for useful comments on a preliminary version of the paper. Partial financial support was provided by the NSF under grant number DMS-1502236.
	
	\section{Preliminaries}
	In this paper we work in characteristic $0$.
	\begin{comment}
	\begin{lem}
	Let $\psi:Y\longrightarrow Y'$ be a birational morphism of $\mathbb{Q}$-factorial normal varieties. Let $\Delta$ be an effective $\mathbb{R}$-divisor on $Y$ and $\Delta'=\psi_*\Delta$.
	Then $\mathrm{vol}(\Delta)\leq \mathrm{vol}(\Delta')$.
	\end{lem}
	
	\begin{proof}
	We have $\psi^*\Delta'=\psi^{-1}_*\Delta'+\sum_ib_iE_i$ where $b_i>0$. Now $\Delta= \psi^{-1}_*\Delta'+\sum_ic_iE_i$.  Therefore,
	\[\Delta=\psi^*\Delta'+\sum_i(c_i-b_i)E_i.\]
	Let $I$ be the set of indices such that $(c_i-b_i)>0$. Then $\Delta \leq \psi^*\Delta'\sum_{i\in I}(c_i-b_i)E_i$. Hence 
	\[\mathrm{vol}(\Delta)\leq \mathrm{vol}(\psi^*\Delta'+\sum_{i\in I}(c_i-b_i)E_i)=\mathrm{vol}(\psi^*\Delta')=\mathrm{vol}(\Delta').\]
	
	\end{proof}
	
	\end{comment}
	
	\subsection{Geometric invariants.}
	Let $X$ be a smooth projective variety over a field $k$. The N\'{e}ron-Severi group $\mathrm{NS}(X)$ is defined as the quotient of the group of Weil divisors, $\mathrm{Wdiv}(X)$, modulo algebraic equivalence. The pseudo-effective cone $\Lambda_{\mathrm{eff}}(X)$ is the closure of the cone of effective divisor classes in $\mathrm{NS}(X)_{\mathbb{R}}$. The interior of $\Lambda_{\mathrm{eff}}(X)$ is the cone of big divisors $\mathrm{Big}^1(X)_{\mathbb{R}}$. \\
	
	\begin{comment}We denote $N^1(X)= \mathrm{Cdiv}(X)\slash \equiv$, the quotient of Cartier divisors by numerical equivalence. We denote $\mathrm{NS}(X)_{\mathbb{R}}= \mathrm{NS}(X) \otimes \mathbb{R}$ and similarly $N^1(X)_{\mathbb{R}}$.
	By [N\'{e}r52], $\mathrm{NS}(X)_{\mathbb{R}}$ is a finite-dimensional vector space and its rank $\rho(X)$ is called the Picard rank. If $X$ is a smooth projective variety, then $ \mathrm{NS}(X)_{\mathbb{R}} \cong N^1(X)_{\mathbb{R}}$. 
	\end{comment}

	\begin{defn} Let $L$ be a big Cartier $\mathbb{Q}$-divisor on $X$. The $a$-constant is defined as 
		\[a(X,L)= \mathrm{min}\{t\in \mathbb{R}| [K_X+tL] \in \Lambda_{\mathrm{eff}}(X)\}\]

	\end{defn}
	If $L$ is not big, we formally set $a(X,L)=\infty$. For a singular projective variety we define $a(X,L):= a(\widetilde{X},\pi^*L)$ where $\pi: \widetilde{X}\longrightarrow X$ is a resolution of $X$. It is invariant under pull-back by a birational morphism of smooth varieties and hence independent of the choice of the resolution. By [BDPP13] we know that  $a(X,L)>0$ if and only if $X$ is uniruled. We note that the $a$-constant is independent of base change to another field. It was shown in [BCHM10] that, if $X$ is uniruled and has klt singularities, then for an ample $\mathbb{Q}$-Cartier $\mathbb{Q}$-divisor $L$, the Fujita invariant $a(X,L)$ is a rational number. If $L$ is big and not ample, then $a(X,L)$ can be irrational (see [HTT15, Example 6]). \\
	\begin{comment}
	For a smooth projective variety $X$, the function $a(X,\_):\mathrm{Big}^1(X)_{\mathbb{R}} \longrightarrow \mathbb{R}$ is a continuous function (see [LTT16, Lemma 3.2]).\\
	\end{comment}
	
	\begin{defn}
		Let $X$ be a smooth projective variety over $k$ and $L$ a big Cartier $\mathbb{Q}$-divisor. The $b$-constant is defined as
		\[ b(k,X,L) = \mathrm{codim \hspace*{0.1cm} of \hspace*{0.1cm}minimal\hspace*{0.1cm} supported\hspace*{0.1cm} face\hspace*{0.1cm} of }\hspace*{0.1cm} \Lambda_{\mathrm{eff}}(X)\hspace*{0.1cm}\] \[\mathrm{containing \hspace*{0.1cm} the\hspace*{0.1cm} class\hspace*{0.1cm} of\hspace*{0.1cm} }K_X+a(X,L)L.\]
	\end{defn}

	It is invariant under pullback by a birational morphism of smooth varieties ([HTT15]). For a singular variety $X$ we define $b(k,X,L):=b(k,\widetilde{X},\pi^*L)$, by pulling back to a resolution. In general the $b$-constant depends on the base field $k$. It is invariant under base change of algebraically closed fields.\\
	
	For the rest of this section we work over an algebraically closed field $k$.\\

	We have the following result about the behaviour of the  $a$-constant under restriction to subvarieties.
	
	\begin{thm} (see [HJ16, Theorem 1.1], [LTT16, Theorem 4.8] )
		Let $X$ be a smooth uniruled projective variety and $L$ a big and nef $\mathbb{Q}$-divisor. Then there is a proper closed subset $V\subsetneq X$ such that any subvariety $Y$ satisfying $a(Y,L|_Y)>a(X,L)$ is contained in $V$.
	\end{thm}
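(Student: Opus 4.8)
The plan is to argue by contradiction: I will assume that the subvarieties $Y$ with $a(Y,L|_Y)>a(X,L)=:a_0$ are \emph{not} contained in any proper closed subset, and from this manufacture a single dominant family of such subvarieties whose existence contradicts the behaviour of the Fujita invariant in families. Two harmless reductions come first. Since $L$ is big and nef, $\mathbf{B}_{+}(L)$ is a proper closed subset and $L|_Y$ is big whenever $Y\not\subset\mathbf{B}_{+}(L)$ (by [Laz04]); so it is enough to control the $Y$ with $L|_Y$ big, for which $a(Y,L|_Y)$ is a finite number, the final $V$ being $\mathbf{B}_{+}(L)$ together with the closure of the union of all such $Y$ — a set the argument will show is not all of $X$. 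Moreover I may assume the ground field $k$ is uncountable, since both the $a$-invariant and the property of lying in a proper closed subset are insensitive to extension of the algebraically closed base field; this lets me use very general members of families freely.

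Next I would organise the $Y$'s into families. They are parametrised by the countably many irreducible components $M_i$ of $\mathrm{Hilb}(X)$, with universal families $\pi_i\colon\mathcal Z_i\to M_i$ and evaluation maps $s_i\colon\mathcal Z_i\to X$. The crucial input is that, on the locus of $w\in M_i$ where $L|_{(\mathcal Z_i)_w}$ is big, the fibrewise Fujita invariant $w\mapsto a\big((\mathcal Z_i)_w,L|_{(\mathcal Z_i)_w}\big)$ is a constructible function of $w$; this is the technical core of the argument in [LTT16, \S4] and ultimately rests on [BCHM10]. Hence the locus $M_i'$ where this invariant exceeds $a_0$ is constructible, the closure $Z_i\subseteq X$ of $s_i\big(\pi_i^{-1}(M_i')\big)$ is closed, and by hypothesis $\bigcup_iZ_i=X$; since $k$ is uncountable, some $Z_{i_0}=X$. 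Shrinking $M_{i_0}'$ to an irreducible locally closed piece whose universal family still dominates $X$, passing to its closure $U$, shrinking once more so that the fibrewise invariant is a constant $a_1>a_0$ on a dense open, and resolving, I obtain a smooth projective $G$ with dominant morphisms $\pi\colon G\to U$ (onto an irreducible base) and $g\colon G\to X$ such that for very general $w\in U$ the fibre $G_w$ is a smooth birational model of one of the $Y$'s, so $a\big(G_w,(g^*L)|_{G_w}\big)=a_1$.

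I would then cut the base down so that $g$ becomes generically finite. If $d=\dim G-\dim X>0$, then $\dim U=\dim G-\dim Y>d$ because $Y\subsetneq X$, so I replace $U$ by the complete intersection $U'$ of $d$ general members of a very ample linear system and $G$ by a resolution of $\pi^{-1}(U')$; this forces $\dim G=\dim X$ and keeps the general $\pi$-fibre a high-$a$ subvariety. That $g$ remains dominant is because for general $x\in X$ the image $\pi(g^{-1}(x))\subseteq U$ is $d$-dimensional, hence meets a general complete intersection of $d$ ample divisors by positivity of intersection numbers, so that $x\in g(\pi^{-1}(U'))$. Thus I may assume $f:=g\colon G\to X$ is a generically finite surjective morphism of smooth projective varieties, fitting in $\pi\colon G\to U$ whose very general fibre $F$ satisfies $a\big(F,(f^*L)|_F\big)=a_1$.

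Finally two inequalities collide. Because $f$ is generically finite and surjective between smooth projectives, $K_G=f^*K_X+R$ with $R\ge0$ effective; hence $K_G+tf^*L=f^*(K_X+tL)+R$ is pseudo-effective whenever $K_X+tL$ is, so $a(G,f^*L)\le a_0$. On the other hand the fibres of $\pi$ cover $G$, and over an uncountable field the restriction of a pseudo-effective class to a very general fibre is pseudo-effective; since $K_G|_F=K_F$ for the very general fibre $F$ (its normal bundle being trivial), $K_G+tf^*L$ pseudo-effective forces $K_F+t(f^*L)|_F$ pseudo-effective, i.e. $t\ge a\big(F,(f^*L)|_F\big)=a_1$, so $a(G,f^*L)\ge a_1$. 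Together these give $a_0\ge a(G,f^*L)\ge a_1>a_0$, the desired contradiction. The step I expect to be the main obstacle is the constructibility of the fibrewise Fujita invariant used at the outset: it is exactly what upgrades a merely Zariski-dense — and a priori non-constructible — supply of high-$a$ subvarieties to an honest family whose general member has $a$-invariant strictly above $a_0$, and it is the only place where serious birational geometry (the minimal model program, and boundedness results in the later refinements) is needed; everything after it is adjunction, a dimension count, and positivity of intersection numbers.
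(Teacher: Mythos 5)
The paper does not supply its own proof of this statement; it cites [HJ16] and [LTT16] and merely notes that the BAB input in [LTT16] is now unconditional by [Bir16]. Your sketch does follow the broad strategy of [LTT16]: parametrize subvarieties by components of the Hilbert scheme, use constructibility of the fibrewise Fujita invariant to extract a family, arrange a generically finite evaluation map, and then play off the inequality $a(G,f^*L)\le a(X,L)$ (from generic finiteness) against $a(G,f^*L)\ge a(F,(f^*L)|_F)$ (from restriction of pseudo-effectivity to a very general fibre). The dimension count to make $g$ generically finite and the two inequalities at the end are sound.

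However, there is a genuine gap at the pivot point of your contradiction: the step \emph{``by hypothesis $\bigcup_i Z_i = X$; since $k$ is uncountable, some $Z_{i_0}=X$.''} The contradiction hypothesis (that the high-$a$ subvarieties are not contained in any proper closed subset) only tells you that $\bigcup_i Z_i$ is Zariski dense; it does not give $\bigcup_i Z_i = X$, and even over an uncountable field a countable union of \emph{proper} closed subsets can be Zariski dense without any one of them being all of $X$ (e.g.\ countably many points in $\mathbb{P}^1$). So your argument only rules out the existence of a single dominant family of high-$a$ subvarieties; it does not show that the union of the images of the countably many families is contained in one proper closed $V$. That last step is exactly where [LTT16] invokes boundedness: BAB (now [Bir16]) shows the adjoint-rigid high-$a$ subvarieties lie in a \emph{bounded} family, so only finitely many Hilbert-scheme strata are relevant, and the union of the finitely many proper $Z_i$ is then a proper closed subset; non-adjoint-rigid subvarieties are handled by fibering and recursion. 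Without this boundedness input, your proof establishes a weaker conclusion (the bad $Y$'s are contained in a countable union of proper closed subsets) than the statement requires.
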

	
	The above result was proved in [HJ16] when $L$ is big and semi-ample. In [LTT16], it was proved assuming the weak BAB-conjecture. By [Bir16], now we know that the BAB-conjecture holds. Therefore the above result works for $L$ big and nef.\\
	
	The behaviour of the $a$-constant under pull-back to a generically finite cover is depicted in the following inequality.
	
	\begin{lem}
		Let $f: Y\longrightarrow X$ be a generically finite surjective morphism of varieties and $L$ a big $\mathbb{Q}$-Cartier $\mathbb{Q}$-divisor on $X$. Then
		
		\[a(Y,f^*L)\leq a(X,L).\]
	\end{lem}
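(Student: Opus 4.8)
The plan is to reduce everything to the ramification formula on suitable smooth birational models, and then to use that pseudo-effectivity is preserved under pullback by dominant morphisms. First I would fix a resolution $\pi\colon X'\to X$, so that by definition $a(X,L)=a(X',\pi^*L)$ and $\pi^*L$ is big. Next I would take a resolution $\beta\colon Y'\to Y$ and resolve the indeterminacy of the induced rational map $Y'\dashrightarrow X'$ by a further birational morphism $\gamma\colon Y''\to Y'$ with $Y''$ smooth projective; this yields a morphism $g\colon Y''\to X'$ with $\pi\circ g=f\circ\beta\circ\gamma$. By birational invariance of the $a$-constant, $a(Y,f^*L)=a(Y'',g^*\pi^*L)$. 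Since $g$ is proper and dominant and $\dim Y''=\dim X=\dim X'$, its image is a closed subvariety of $X'$ of dimension $\dim X'$, hence all of $X'$; so $g$ is a generically finite surjective morphism of smooth projective varieties, and $g^*\pi^*L$ is big, being the pullback of a big divisor under a generically finite surjective morphism.

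Next I would invoke the ramification formula. Since we are in characteristic $0$, the generically finite morphism $g$ is generically étale, so the natural map of sheaves $g^*\Omega^1_{X'}\to\Omega^1_{Y''}$ is an isomorphism over a dense open set. Taking top exterior powers yields a nonzero — hence injective — map of line bundles $g^*\omega_{X'}\hookrightarrow\omega_{Y''}$, whose cokernel is a torsion sheaf supported on an effective divisor $R\ge 0$ (the ramification divisor). Thus $K_{Y''}=g^*K_{X'}+R$ with $R$ effective, and therefore, for every $t\in\mathbb{R}$,
\[
K_{Y''}+t\,g^*\pi^*L \;=\; g^*\bigl(K_{X'}+t\,\pi^*L\bigr)+R .
\]

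Finally I would conclude as follows. Put $a:=a(X,L)=a(X',\pi^*L)$; since $\Lambda_{\mathrm{eff}}(X')$ is closed, the class $K_{X'}+a\,\pi^*L$ lies in $\Lambda_{\mathrm{eff}}(X')$. Pullback along the dominant morphism $g$ sends $\Lambda_{\mathrm{eff}}(X')$ into $\Lambda_{\mathrm{eff}}(Y'')$: a class in $\Lambda_{\mathrm{eff}}(X')$ is a limit of effective $\mathbb{R}$-divisor classes, and $g^*$ of an effective divisor is effective because $g$ is dominant, so $g^*$ is a continuous linear map carrying the effective cone into the effective cone, hence its closure into its closure. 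Therefore $g^*(K_{X'}+a\,\pi^*L)\in\Lambda_{\mathrm{eff}}(Y'')$, and adding the effective class $R$ gives $K_{Y''}+a\,g^*\pi^*L\in\Lambda_{\mathrm{eff}}(Y'')$. By the definition of the $a$-constant this forces $a(Y'',g^*\pi^*L)\le a$, i.e.\ $a(Y,f^*L)\le a(X,L)$.

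The computation itself is short; the points that need care are (i) building the smooth model $g\colon Y''\to X'$ that simultaneously dominates $Y$ birationally and maps to the chosen resolution $X'$, so that both $a$-constants can be read off on $Y''$ and $X'$; and (ii) the fact that the ramification identity $K_{Y''}=g^*K_{X'}+R$ with $R$ effective holds for a merely generically finite (not necessarily finite) morphism of smooth varieties, which is precisely the injectivity of $g^*\omega_{X'}\to\omega_{Y''}$ recorded above. The hypothesis that $L$ is big is only used to guarantee $g^*\pi^*L$ is big; if $L$ were not big one adopts the convention $a=\infty$ and the inequality is trivial.
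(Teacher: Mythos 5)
Your proof is correct and takes essentially the same route as the paper: reduce to a generically finite morphism of smooth projective models, apply the ramification formula $K_{Y''}=g^*K_{X'}+R$ with $R\geq 0$, and conclude by noting that pullback by a dominant morphism preserves pseudo-effectivity. You supply more detail than the paper on the construction of the smooth model $g\colon Y''\to X'$ and on why $g^*$ carries $\Lambda_{\mathrm{eff}}(X')$ into $\Lambda_{\mathrm{eff}}(Y'')$, but the core computation is identical.
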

	
	\begin{proof}
		Since the $a$-constant is computed on a resolution of singularities. We may assume $X$ and $Y$ are smooth. As $f:Y\longrightarrow X$ is generically finite, we may write
		\[K_Y=f^*K_X+R\]
		for some effective divisor $R$. Let $a(X,L)=a$. Then we have
		\[K_Y+af^*L=f^*(K_X+aL)+R.\]
		Since $R\geq 0$ and $K_X+aL$ is pseudo-effective, we see that $K_Y+af^*L$ is also pseudo-effective. Hence $a(Y,f^*L)\leq a(X,L)=a$.
		
	\end{proof}
	
	Since the $a$-constant is not necessarily rational, we need to work with $\mathbb{R}$-divisors. We recall the definition of Iitaka dimension for $\mathbb{R}$-divisors. 
	
	\begin{defn}
		Let $X$ be a smooth projective variety and $D$ be an $\mathbb{R}$-divisor on $X$. We define 
		\[\kappa(X,D)= \mathrm{lim\hspace*{0.1cm}sup}_{m\to \infty}\frac{\mathrm{log}h^0(X,\mathcal{O}_X(\lfloor mD \rfloor))}{\mathrm{log}m}\]
		
		If $X$ is a normal projective variety and $D$ an $\mathbb{R}$-Cartier divisor, then we define 
		\[\kappa(X,D)=\kappa(\widetilde{X},\pi^*D)\]
		for a resolution of singularities $\pi:\widetilde{X}\longrightarrow X$. It is easy to see that the definition is independent of the choice of the resolution. Note that it is not necessarily true that if $D\sim_{\mathbb{R}}D'$ then $\kappa(X,D)=\kappa(X,D')$.
	\end{defn}

	We form the following definition for convenience.
	
	\begin{defn} (cf. [LT17], Section 4.1) Let $X$ be a smooth uniruled variety and $L$ a big and nef $\mathbb{Q}$-Cartier $\mathbb{Q}$-divisor. We say that a morphism $f:Y\longrightarrow X$ is an adjoint-rigid  cover preserving the $a$-constant if,
		\begin{itemize}
			\item[(1)]$f:Y \longrightarrow X$ is a generically finite surjective morphism from a normal variety $Y$,
			\item[(2)]$a(Y,f^*L)=a(X,L)$,
			\item[(3)]$\kappa(K_Y+a(Y,f^*L)f^*L)=0$.\\
		\end{itemize}
		
	\end{defn}
	Note that the conditions $(1)$-$(3)$ are preserved under taking a resolution of singularities.\\

	If $X$ is a smooth surface and $E$ is a curve contracted by the $K_X$-MMP, then $K_E$ is not pseudo-effective.
	The following proposition is a generalization of this fact and it is a key step for proving Theorem 1.1. For a smooth projective uniruled variety $X$ and a big and nef divisor $L$, this proposition enables us to compare $a(X,L)$ with the $a$-constants of $L$ under restriction to the exceptional divisors contracted by a $K_X+a(X,L)L$-MMP. \\

	\begin{prop}
		Let $X$ be a normal variety with canonical singularities and $\Delta$ an effective $\mathbb{R}$-Cartier $\mathbb{R}$-divisor which is nef. Suppose $\psi:X\dashrightarrow X'$ is a minimal model for $(X,\Delta)$ obtained by a running a $K_X+\Delta$-MMP. Let $E$ be an exceptional divisor contracted by $\psi$. Let $\pi: \widetilde{E} \longrightarrow E$ be a resolution of singularities. Then $K_{\widetilde{E}}+\pi^*(\Delta|_E)$ is not pseudo-effective.\\
	\end{prop}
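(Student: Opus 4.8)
The plan is to reduce the statement to the single divisorial contraction of the program that actually contracts $E$, and there to combine two sources of negativity: that of $K+\Delta$ on the contracted ray, and that of the exceptional divisor on it. Since $\Delta$ is nef, every $(K_{X_i}+\Delta_i)$-negative extremal ray is also $K_{X_i}$-negative, so the $K_X+\Delta$-MMP consists of flips and divisorial contractions of $K_{X_i}$-negative rays; in particular every $X_i$ stays $\mathbb{Q}$-factorial with canonical singularities, $\Delta_i$ being the strict transform of $\Delta$. The divisor $E$ is contracted at exactly one step, a divisorial contraction $\phi\colon X_j\to X_{j+1}$ whose exceptional divisor is the strict transform $E_j$ of $E$. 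For any $\phi$-contracted curve $C$ one has both $(K_{X_j}+\Delta_j)\cdot C<0$ and $E_j\cdot C<0$ (the exceptional divisor of a divisorial extremal contraction is negative on the ray), hence $(K_{X_j}+\Delta_j+E_j)\cdot C<0$. Since $\phi(E_j)$ has codimension $\ge 2$, the general fibre of $\phi|_{E_j}$ is positive-dimensional and, being a fibre of a $K_{X_j}$-negative contraction, is swept out by $\phi$-contracted rational curves; varying the fibre produces a family $\{C_t\}$ of such curves dominating $E_j$. Consequently $(K_{X_j}+\Delta_j+E_j)|_{E_j}$ meets a movable curve class negatively, so neither it nor its pull-back to a resolution of $E_j$ is pseudo-effective.

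To pass from $E_j$ to $E\subset X$, fix a common log resolution $g\colon W\to X$, $h\colon W\to X_j$, $h'\colon W\to X_{j+1}$, with $\widetilde E\subset W$ the strict transform of $E$, so that $\pi:=g|_{\widetilde E}\colon\widetilde E\to E$ is a resolution and $h|_{\widetilde E}$ maps $\widetilde E$ birationally to $E_j$. From adjunction $K_{\widetilde E}=(K_W+\widetilde E)|_{\widetilde E}$ and $\pi^*(\Delta|_E)=(g^*\Delta)|_{\widetilde E}$ we get $K_{\widetilde E}+\pi^*(\Delta|_E)=(K_W+\widetilde E+g^*\Delta)|_{\widetilde E}$. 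Now substitute successively: $K_W+g^*\Delta=g^*(K_X+\Delta)+(K_W-g^*K_X)$ with $K_W-g^*K_X\ge 0$ ($X$ canonical); $g^*(K_X+\Delta)=h^*(K_{X_j}+\Delta_j)+F$ with $F\ge 0$ and $h$-exceptional, and with $\widetilde E$ occurring in $F$ with coefficient $0$ (monotonicity of the discrepancies of $(\,\cdot\,,\Delta_\bullet)$ along the program, $E$ being uncontracted up to step $j$); and $h^*(K_{X_j}+\Delta_j)=h'^*(K_{X_{j+1}}+\Delta_{j+1})+a_{E_j}h^*E_j$ with $a_{E_j}>0$ the discrepancy of $\phi$ and $h^*E_j=\widetilde E+(h\text{-exceptional effective})$. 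Intersecting the resulting divisor with the strict transform $\widetilde C_t$ of a general $C_t$, the pull-back $h'^*(K_{X_{j+1}}+\Delta_{j+1})$ contributes $0$ (as $C_t$ is $\phi$-contracted), the dominant contribution is $(a_{E_j}+1)\,E_j\cdot C_t$, and the remaining discrepancy and multiplicity corrections contribute non-positively against the general member $\widetilde C_t$; therefore
\[
\bigl(K_{\widetilde E}+\pi^*(\Delta|_E)\bigr)\cdot\widetilde C_t\ \le\ (a_{E_j}+1)\,E_j\cdot C_t\ <\ 0 .
\]
Since $\{\widetilde C_t\}$ dominates $\widetilde E$, the class $K_{\widetilde E}+\pi^*(\Delta|_E)$ is not pseudo-effective.

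The crux is this last bookkeeping step. When $\psi$ is a single divisorial contraction — in particular whenever $X$ is smooth — the computation collapses to $(K_E+\Delta|_E)\cdot C_t=(1+a_E)\,E\cdot C_t<0$ along a covering family of $E$, and there is nothing to correct. In general, however, $X\dashrightarrow X_j$ is built from flips and from the divisorial contractions of $E_1,\dots,E_{j-1}$, so it is not an isomorphism in codimension one; one must verify that a general member of the family of curves avoids the loci produced by these operations and by resolving $X_j$ and $E_j$, and that the signs of all discrepancy and multiplicity corrections along such a curve go the right way. This is exactly where one uses that $X$ (hence each $X_i$) is canonical, that $\Delta$ is nef, and that discrepancies do not decrease along a $K_X+\Delta$-MMP.
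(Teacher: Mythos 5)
Your argument works when $X$ is smooth and $\psi$ is a single divisorial contraction, and your final paragraph correctly flags the general case as the crux — but the bookkeeping there does not close. The $\widetilde C_t$ form a covering family of $\widetilde E$, so a general member meets any \emph{divisor} of $\widetilde E$, including the restrictions to $\widetilde E$ of the effective exceptional terms $D=K_W-g^*K_X$, $F$, and $G=h^*E_j-\widetilde E$ appearing in your expansion. Carrying the substitution through, one finds
\[
\bigl(K_{\widetilde E}+\pi^*(\Delta|_E)\bigr)\cdot\widetilde C_t
=(a_{E_j}+1)\,E_j\cdot C_t+(F+D-G)\cdot\widetilde C_t,
\]
and there is no reason for $(F+D-G)\cdot\widetilde C_t$ to be $\le 0$: each of $F$, $D$, $G$ has a priori positive intersection with a general $\widetilde C_t$, and the signs do not align. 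The root cause is that $E$ is \emph{not} a log canonical centre of $(X,\Delta)$, so the $K_X+\Delta$-MMP has no reason to restrict to anything controlled on $E$ — the flips of $X$ can induce arbitrary birational modifications of $E$ (not birational contractions), and the effective terms $F$, $D$, $G$ record exactly that loss of control. Your preliminary reduction is also shaky: $\Delta_i$ is only the strict transform of $\Delta$ and need not remain nef after a step of the MMP, so the claim that every $(K_{X_i}+\Delta_i)$-negative ray is $K_{X_i}$-negative (and hence that each $X_i$ stays canonical) is unjustified.

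The paper sidesteps all of this by a different device, and this is the idea your proposal is missing. After reducing to $X$ smooth and $\Delta$ ample (add a small ample $H$, pass to a log resolution of $(X,E+\Delta)$, and use that minimal models of klt pairs agree in codimension one to see $\widetilde E$ is still contracted), one chooses $\Delta_0\sim_{\mathbb R}\Delta$ so that $(X,E+\Delta_0)$ is SNC dlt and runs a $K_X+E+\Delta_0$-MMP — that is, with $E$ \emph{in the boundary}. Since $E$ lies in the negative part of the Zariski decomposition of $K_X+E+\Delta_0$, this MMP contracts $E$; and because $E$ is now an lc centre, the restriction theorem of Ambro--Koll\'ar forces the program to restrict to a genuine $K_E+\mathrm{Diff}_E\Delta_0$-MMP on $E$. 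One then lands in exactly the clean situation your single-contraction case handles: $E_k$ is swept out by curves $C$ with $(K_{E_k}+\mathrm{Diff}_{E_k}\Delta_k)\cdot C=(K_{X_k}+E_k+\Delta_k)\cdot C<0$, and non–pseudo-effectivity descends along the induced MMP on $E$. In short, the fix is to put $E$ into the boundary so that adjunction controls its birational transformations, rather than tracking $E$ through an MMP in which it has no special status.
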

	
	\begin{proof} Note that it enough to prove the statement for one resolution of singularities of $E$. In particular, let $\pi: \widetilde{X}\longrightarrow X$ be a log resolution of $(X,E+\Delta)$ such that we have a morphism $\phi= \psi \circ \pi: \widetilde{X}\longrightarrow X'$. Let $\widetilde{E}=\pi^{-1}_*E$ be the strict transform. We reduce to the case when $X$ and $E$ are smooth and $\Delta$ is ample as follows. Let $H$ be a general ample divisor on $X$. Then $\psi: X\dashrightarrow X'$ is also a $K_X+\Delta+\epsilon H$-MMP for $\epsilon>0$ sufficiently small. Hence, by replacing $\Delta$ with $\Delta+\epsilon H$, we may assume that $\Delta$ is ample. Note that, since $X$ has canonical singularities, $\phi:\widetilde{X}\longrightarrow X'$ is a $K_{\widetilde{X}}+\pi^*\Delta$-minimal model. As $\pi^*\Delta$ is big and nef, we may choose $\widetilde{\Delta}\sim_{\mathbb{R}}\pi^*\Delta$ such that $(\widetilde{X},\widetilde{\Delta})$ is klt ([Xu15, Proposition 2.3]). Now $\phi:\widetilde{X}\longrightarrow X'$ is a minimal model for $(\widetilde{X},\widetilde{\Delta})$ and since minimal models of klt pairs are isomorphic in codimension one ([BCHM10, Corollary 1.1.3]), we know that $\widetilde{E}$ will be contracted by any $K_{\widetilde{X}}+\widetilde{\Delta}$-MMP. Therefore we may assume that $X$ and $E$ are smooth and $\Delta$ is ample.  We need to show that $K_E+\Delta|_E$ is not pseudo-effective.\\

		Since $\Delta$ is ample, we may choose $\Delta_0\sim_{\mathbb{R}}\Delta$ such that $(X,E+\Delta_0)$ is simple normal crossing and divisorially log terminal, $(X,\Delta_0)$ is kawamata log terminal and $(E,\Delta_0|_E)$ is canonical, by using the Bertini theorem ([Xu15, Lemma 2.2]). \\
		
		As $(X,E+\Delta_0)$ is dlt and $\Delta_0$ is ample, by [BCHM10], we may run a $K_X+E+\Delta_0$-MMP. Since we know that the ACC holds for log canonical thresholds ([HMX14]) and special termination holds for dlt flips ([BCHM, Lemma 5.1]), the
		$K_X+E+\Delta_0$-MMP terminates with a minimal model $\theta: X\dashrightarrow X_m$ by [Bir07, Theorem 1.2]. Since $E$ is contained in the negative part of the Zariski decomposition of $K_X+E+\Delta_0$, the MMP given by $\theta$ contracts $E$. Let $\theta_k: X_k\longrightarrow X_{k+1}$ be the divisorial contraction step of the $K_X+E+\Delta_0$-MMP that contracts the push-forward of $E$ on $X_k$. Let $\Theta_k= X\dashrightarrow X
		_k$ be the composition of the steps of the $K_X+E+\Delta_0$-MMP. We denote $\Delta_k={\Theta_k}_*\Delta_0$ and $E_k={\Theta_k}_*E$. Note that $E_k$ is normal ([KM98, Proposition 5.51]). By [AK17, Theorem 7], the restriction map $\Theta_k|_E:(E,\mathrm{Diff}_E\Delta_0) \dashrightarrow (E_k,\mathrm{Diff}_{E_k}\Delta_k)$ is a composition of steps of a $K_E+\mathrm{Diff}_E\Delta_0$-MMP. As $\mathrm{Diff}_E\Delta_0=\Delta_0|_E$, it is enough to show that $K_{E_k}+\mathrm{Diff}_{E_k}\Delta_k$ is not pseudo-effective. This follows from the fact that $E_k$ is covered by curves $C$ such that $(K_{E_k}+\mathrm{Diff}_{E_k}\Delta_k)\cdot C=(K_{X_k}+E_k+\Delta_k)\cdot C<0$.

	\end{proof}

	Note that the assumption about $\Delta$ being nef is necessary. For example, let $Y$ be a minimal surface and $X=\mathrm{Bl}_4(\mathrm{Bl}_y Y)$ be the blow-up of $\mathrm{Bl}_y Y$ at four distinct points $y_i\in E$, $1\leq i \leq 4$, where $E\subset \mathrm{Bl}_y Y$ is the exceptional curve corresponding to $y \in Y$. Let $E_i\subset X$ be the exceptional curve corresponding to $y_i$ for $1\leq i\leq 4$ and $E_0$ be the strict transform of $E$ on $X$. Let $\Delta= \frac{1}{2}E_1+\frac{1}{2}E_2+\frac{1}{2}E_3+\frac{1}{2}E_4$. Note that $\Delta$ is not nef as $\Delta \cdot E_4= -\frac{1}{2}$. Now $E_0$ is contracted by the $K_X+\Delta$-MMP but $\mathrm{deg}(K_{E_0}+\Delta|_{E_0})=0$.

	\begin{cor}
		Let $X$ be a smooth projective uniruled variety and $L$ a big and nef $\mathbb{Q}$-divisor. Let $f:Y\longrightarrow X$ be a generically finite cover with $Y$ smooth and $a(Y,f^*L)=a(X,L)$. Let $R\subset Y$ be a component of the ramification divisor of $f$ (i.e. the strict transform of a component of the ramification divisor for the Stein factorization of $f$) and $B$ be the component of the branch divisor on $X$ which is the image of $R$. If $R$ is contracted by a $K_Y+a(X,L)f^*L$-MMP, then 
		\[a(B,L|_B)>a(X,L)\]
	\end{cor}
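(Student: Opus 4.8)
The plan is to obtain the inequality by applying Proposition 2.7 to $Y$ with the boundary $a(X,L)f^*L$, and then to compare the resulting non-pseudo-effectivity statement on a resolution of $R$ with the $a$-constant of $B$ via Lemma 2.4. Write $a=a(X,L)$. If $L|_B$ is not big then $a(B,L|_B)=\infty>a$ by convention and there is nothing to prove, so I assume $L|_B$ is big. Since $R$ is a prime divisor on $Y$ and $f$ is generically finite, the image $B=f(R)$ is a prime divisor on $X$ and the induced morphism $f|_R:R\to B$ is generically finite and surjective; in particular $(f^*L)|_R$ and $(f|_R)^*(L|_B)$ agree as $\mathbb{Q}$-divisor classes.

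For the main step I apply Proposition 2.7 to $Y$ with the divisor $a\,f^*L=a(Y,f^*L)\,f^*L$. This divisor is nef, because $L$ is nef and $a>0$, and it is big; being big and nef it is $\mathbb{R}$-linearly equivalent to an effective divisor $\Delta$ with $(Y,\Delta)$ klt, by [Xu15, Proposition 2.3]. Then $\Delta$ is effective and nef (as $\Delta\equiv a\,f^*L$), and $K_Y+\Delta\equiv K_Y+a\,f^*L$ is pseudo-effective since $a(Y,f^*L)=a$. By hypothesis $R$ is contracted by a $K_Y+a\,f^*L$-MMP, equivalently a $K_Y+\Delta$-MMP (only the numerical class of the boundary enters); since $(Y,\Delta)$ is klt we may take it to terminate at a minimal model $\psi:Y\dashrightarrow Y'$ (by [BCHM10]), which still contracts $R$ because minimal models of klt pairs are isomorphic in codimension one. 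Thus Proposition 2.7 applies: for a suitable resolution $\pi:\widetilde R\to R$, the class $K_{\widetilde R}+\pi^*(\Delta|_R)$ is not pseudo-effective.

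To transfer this to $B$, choose $\pi$ so that the composite $\widetilde R\to R\to B$ factors through a resolution $\mu:\widetilde B\to B$, say as $\mu\circ\widetilde g$ with $\widetilde g:\widetilde R\to\widetilde B$ generically finite and surjective. Put $M=\mu^*(L|_B)$. Using $\Delta|_R\equiv a\,(f|_R)^*(L|_B)$, the conclusion above reads: $K_{\widetilde R}+a\,\widetilde g^*M$ is not pseudo-effective. Since $L|_B$ is big, so are $M$ and $\widetilde g^*M$, hence $a(\widetilde R,\widetilde g^*M)$ is a finite real number, and as $K_{\widetilde R}+a\,\widetilde g^*M$ is not pseudo-effective while $\Lambda_{\mathrm{eff}}(\widetilde R)$ is closed, we get $a(\widetilde R,\widetilde g^*M)>a$. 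On the other hand Lemma 2.4 applied to $\widetilde g$ gives $a(\widetilde R,\widetilde g^*M)\le a(\widetilde B,M)=a(B,L|_B)$, the last equality being the definition of the $a$-constant for the possibly singular variety $B$. Combining the two inequalities, $a(B,L|_B)\ge a(\widetilde R,\widetilde g^*M)>a=a(X,L)$.

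The substantive input is Proposition 2.7 (together with the observation that the hypothesis ``$L$ nef'' is exactly what makes $a\,f^*L$ nef). I expect the remaining points to be routine bookkeeping: producing an effective klt representative $\Delta$ of $a\,f^*L$ whose associated MMP contracts $R$ and terminates at a minimal model, and choosing compatible resolutions $\widetilde R$, $\widetilde B$ and a generically finite surjective map $\widetilde g:\widetilde R\to\widetilde B$ so that Lemma 2.4 can be invoked. The mildly delicate part is checking that $R$ is still contracted after these replacements, which follows from the uniqueness in codimension one of minimal models of klt pairs.
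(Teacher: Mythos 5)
Your argument is correct and follows the paper's own proof essentially verbatim: assume $L|_B$ big, apply Proposition 2.7 to the boundary $a(X,L)f^*L$ to get non-pseudo-effectivity of $K_{\widetilde R}+a\,\pi^*(f^*L|_R)$ (hence $a(R,f^*L|_R)>a(X,L)$), and then bound $a(R,f^*L|_R)\le a(B,L|_B)$ via Lemma 2.4 applied to $f|_R:R\to B$. You have simply spelled out, more carefully than the paper does, the routine bookkeeping (passing to an effective klt representative $\Delta\sim_{\mathbb R}a\,f^*L$, termination of the MMP via [BCHM10], and the compatible choice of resolutions so that Lemma 2.4 is invoked on smooth models), none of which changes the substance.
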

	
	\begin{proof}
		We may assume that $L|_B$ is big. We have a generically finite surjective map $f|_R: R\longrightarrow B$. Therefore, by Lemma 2.4, we have $a(R,f^*L|_R)\leq a(B,L|_B)$. Now Proposition 2.7 implies that $a(R,a(X,L)f^*L)>1$ and hence $a(R,f^*L|_R)>a(X,L)$.
	\end{proof}

	\subsection{Boundedness statements}
	Let $X$ be a normal projective variety of dimension $n$ and $D$ an $\mathbb{R}$-divisor. The volume of $D$ is defined by
	\[\mathrm{vol}(X,D)= \lim_{m\to\infty} \frac{n!h^0(X,\mathcal{O}_X(\lfloor mD\rfloor))}{m^n}\]
	
	If $D$ is nef then $\mathrm{vol}(X,D)=D^n$. Also $D$ is big iff $\mathrm{vol}(X,D)>0$. The volume depends only on the numerical class $[D]\in N^1(X)$. \\

	Let $L$ be a pseudo-effective $\mathbb{Q}$-Cartier divisor on $X$. The stable base locus of $L$ ([Laz04]) is defined as
	\[\mathbf{B}(L):= \cap_{m\in \mathbb{N}} \mathrm{Bs}(mL)\]
	where the intersection is taken over $m$ such that $mL$ is Cartier. The augmented base locus of $L$ is defined as
	\[\mathbf{B}_{+}L:= \cap_{A} \mathbf{B}(L-A)
	\]
	where the intersection is over all ample $\mathbb{Q}$-Cartier divisors $A$. It is known that $\mathbf{B}_{+}(L)$ is a closed subset of $X$. If $L$ is big, then $L|_Z$ is big for any subvariety $Z\not\subset \mathbf{B}_{+}L$. We recall the following well-known result.\\
	
	\begin{lem}
		Let $f:Y\longrightarrow X$ and $g:X\longrightarrow W$ be a birational morphism of normal projective varieties and $D$ an $\mathbb{R}$-divisor on $X$.
		\begin{itemize}
			\item[(1)] $\mathrm{vol}(W,g_*D)\geq\mathrm{vol}(X,D)$,
			\item[(2)] If $D$ is $\mathbb{R}$-Cartier and $E_i$ are $f$-exceptional, then \[\mathrm{vol}(f^*D+\tsum_i a_iE_i)=\mathrm{vol}(X,D)\] for $a_i>0$.\\
			
		\end{itemize}
	\end{lem}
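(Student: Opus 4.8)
The plan is to prove both parts by comparing, for each positive integer $m$, the spaces of global sections of $\mathcal{O}(\lfloor m(\cdot)\rfloor)$ and then passing to the limit in the definition of the volume. The only mechanism used is that the multiplicity of a prime divisor in a round-down equals the round-down of the multiplicity, combined with the behaviour of multiplicities of Weil divisors under the birational morphisms $f$ and $g$.

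For (1), fix a positive integer $m$ and a nonzero $\phi\in H^0(X,\mathcal{O}_X(\lfloor mD\rfloor))$, i.e. $\mathrm{div}_X(\phi)+\lfloor mD\rfloor\ge 0$. I would check that $\phi$ also lies in $H^0(W,\mathcal{O}_W(\lfloor mg_*D\rfloor))$: for a prime divisor $P$ on $W$ with strict transform $\widetilde{P}\subset X$, the morphism $g$ is an isomorphism near the generic point of $\widetilde{P}$, so $\mathrm{ord}_P=\mathrm{ord}_{\widetilde{P}}$ on $k(W)=k(X)$ and the coefficient of $P$ in $g_*D$ equals the coefficient of $\widetilde{P}$ in $D$; hence $\mathrm{ord}_P(\lfloor mg_*D\rfloor)=\mathrm{ord}_{\widetilde{P}}(\lfloor mD\rfloor)$ and $\mathrm{ord}_P(\mathrm{div}_W(\phi)+\lfloor mg_*D\rfloor)=\mathrm{ord}_{\widetilde{P}}(\mathrm{div}_X(\phi)+\lfloor mD\rfloor)\ge 0$. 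This produces injections $H^0(X,\mathcal{O}_X(\lfloor mD\rfloor))\hookrightarrow H^0(W,\mathcal{O}_W(\lfloor mg_*D\rfloor))$ for all $m$; dividing by $m^n/n!$ with $n=\dim X=\dim W$ and letting $m\to\infty$ gives $\mathrm{vol}(X,D)\le\mathrm{vol}(W,g_*D)$.

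For (2), set $F=\sum_i a_iE_i\ge 0$. The bound $\mathrm{vol}(Y,f^*D+F)\le\mathrm{vol}(X,D)$ is immediate from (1) applied to $f$, since $f_*(f^*D+F)=D$ (here $f_*F=0$ because the $E_i$ are $f$-exceptional, and $f_*f^*D=D$ since $D$ is $\mathbb{R}$-Cartier). For the reverse bound I would first note $\mathrm{vol}(Y,f^*D+F)\ge\mathrm{vol}(Y,f^*D)$: from $F\ge 0$ we get $\lfloor m(f^*D+F)\rfloor\ge\lfloor mf^*D\rfloor$ and hence an inclusion of the corresponding reflexive sheaves. It then remains to see $\mathrm{vol}(Y,f^*D)=\mathrm{vol}(X,D)$, the birational invariance of the volume for $\mathbb{R}$-Cartier divisors ([Laz04]); here "$\le$" is (1) once more, while "$\ge$" again follows as in (1), using $H^0(X,\mathcal{O}_X(\lfloor mD\rfloor))=\{\phi:\mathrm{div}_X(\phi)+mD\ge 0\}$ together with the fact that the pullback of the effective $\mathbb{R}$-Cartier divisor $\mathrm{div}_X(\phi)+mD$ is effective and equals $\mathrm{div}_Y(\phi)+mf^*D$. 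Chaining the inequalities forces equality throughout.

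I do not expect a genuine obstacle; the statement is elementary. The points needing care are purely bookkeeping: tracking multiplicities of round-downs and pushforwards/strict transforms of Weil divisors in (1), and invoking in (2) the two classical facts above, namely birational invariance of the volume for $\mathbb{R}$-Cartier divisors and effectivity of the pullback of an effective $\mathbb{R}$-Cartier divisor under a birational morphism.
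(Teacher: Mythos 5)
The paper states this lemma as a well-known fact with a bare pointer to the literature (``We recall the following well-known result'') and gives no proof, so there is nothing in the text to compare your argument against; it has to be judged on its own. Your proof is correct. Part (1) is exactly the standard argument: since $g$ is a proper birational morphism of normal varieties, it is an isomorphism over the complement of a closed set of codimension $\geq 2$ in $W$, so every prime divisor $P$ on $W$ has a strict transform $\widetilde P$ on $X$ with $\mathrm{ord}_P=\mathrm{ord}_{\widetilde P}$ and the same coefficient in $g_*D$ as in $D$; the resulting identification of round-downs gives $H^0(X,\mathcal O_X(\lfloor mD\rfloor))\hookrightarrow H^0(W,\mathcal O_W(\lfloor m g_*D\rfloor))$ and the volume inequality follows from the definition. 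For part (2) you correctly sandwich $\mathrm{vol}(Y,f^*D+F)$ between $\mathrm{vol}(Y,f^*D)$ (from $F\geq 0$ and the monotonicity of the sheaf $\mathcal O_Y(\lfloor\cdot\rfloor)$) and $\mathrm{vol}(X,D)$ (from (1) applied to $f$, using $f_*(f^*D+F)=D$), and close the loop with birational invariance of the volume. The two points that genuinely need care are exactly the ones you flag: the identity $H^0(X,\mathcal O_X(\lfloor mD\rfloor))=\{\phi:\mathrm{div}_X(\phi)+mD\geq 0\}$, which holds because $\mathrm{div}(\phi)$ has integer coefficients so $\mathrm{ord}_P(\phi)\geq -m\,\mathrm{coeff}_P(D)$ is equivalent to $\mathrm{ord}_P(\phi)\geq\lceil -m\,\mathrm{coeff}_P(D)\rceil=-\lfloor m\,\mathrm{coeff}_P(D)\rfloor$, and the fact that the pullback of an effective $\mathbb R$-Cartier divisor under a birational morphism is effective (which one reduces to the Cartier case by a rational-polyhedral/closedness argument, or simply cites from [Laz04]). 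There is no gap.
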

	
	\begin{defn}
		Let $\psi: X\dashrightarrow X'$ be a proper birational contraction (i.e. $\psi^{-1}$ does not contract any divisors) of normal quasi-projective varieties. Let $D$ be a $\mathbb{R}$-Cartier divisor such that $D'=\psi_*D$ is also $\mathbb{R}$-Cartier. We say that $\psi$ is $D$-negative if for some common resolution $p:W\longrightarrow X$ and $q:W\longrightarrow Y$, we may write
		\[p^*D=q^*D'+E\]
		where $E\geq0$ is $q$-exceptional and the support of $E$ contains the strict transform of the $\psi$-exceptional divisors.
	\end{defn}
	Recall that if $\psi: X\dashrightarrow X'$ is a $K_X+\Delta$-minimal model, then it is  $K_X+\Delta$-negative by definition. Further, if $(X,\Delta)$ is terminal and $p:W\longrightarrow X$, $q:W\longrightarrow X'$ is a common resolution, then $q:W\longrightarrow X'$ is $K_W+p^*\Delta$-negative.\\

	In general, if we have a $K_X+\Delta$-negative contraction $\psi:X \dashrightarrow X'$ of a terminal pair $(X,\Delta)$, then the pushforward $(X',\psi_*\Delta)$ might not be terminal since $\Delta$ might contain the $\psi$-exceptional divisors as components. If $\Delta$ is big and nef, then the following lemma shows that we can achieve the desired conclusion by passing to a resolution.
	This result will be used in Proposition 2.15 for proving the boundedness of degrees of adjoint-rigid covers preserving the $a$-constant.

	\begin{lem}
		Let $X$ be a normal variety with terminal singularities and $D$ a big and nef $\mathbb{R}$-Cartier divisor. Let $\psi: X\dashrightarrow X'$ be a $K_X+D$-negative contraction. Then we may choose a common resolution $p:W\longrightarrow X$ and $q:W\longrightarrow X'$ with $\widetilde{\Delta} \sim_{\mathbb{R}}p^*D$ such that $(W,\widetilde{\Delta})$ and $(X',\Delta'=\mu_*\widetilde{\Delta})$ are both terminal.
	\end{lem}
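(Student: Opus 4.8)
The plan is to take $\widetilde\Delta=p^{*}\Delta_{0}$ for a carefully chosen effective $\Delta_{0}\sim_{\mathbb R}D$ on $X$ with arbitrarily small coefficients, and to read off the discrepancies of $(X',q_{*}\widetilde\Delta)$ directly from the $K_{X}+D$-negativity relation.

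\emph{Geometric set-up.} Since $D$ is big and nef, Kodaira's lemma gives $D\sim_{\mathbb R}A^{0}+N^{0}$ with $A^{0}$ ample and $N^{0}\geq 0$. I would fix a common resolution $p\colon W\to X$, $q\colon W\to X'$ with $W$ smooth, chosen (after passing to a higher model) so that $W$ is a log resolution of $N^{0}$ together with all $\psi$-exceptional prime divisors of $X$, and so that $p^{*}(K_{X}+D)=q^{*}(K_{X'}+D')+E$ with $E\geq 0$ a $q$-exceptional divisor whose support contains the strict transforms of the $\psi$-exceptional divisors. The structural point to record is that, because $\psi$ is a contraction, no $p$-exceptional divisor of $W$ dominates a divisor of $X'$; hence every $p$-exceptional prime divisor of $W$ is $q$-exceptional, and the $q$-exceptional prime divisors are exactly the $p$-exceptional ones together with the strict transforms $p_{*}^{-1}G$ of the $\psi$-exceptional divisors $G$. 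Also, by the negativity lemma $E$ depends only on the $\mathbb R$-linear class of $D$, so $\psi$ is $K_{X}+\Delta_{0}$-negative with the same correction $E$ for every $\Delta_{0}\sim_{\mathbb R}D$.

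\emph{Choice of $\Delta_{0}$.} For $0<\epsilon\ll 1$ the class $A_{\epsilon}:=(1-\epsilon)D+\epsilon A^{0}$ is ample; writing it as a positive $\mathbb R$-combination of very ample integral classes and taking general members of sufficiently high multiples, one gets $\sum_{i}r_{i}A_{i}\sim_{\mathbb R}A_{\epsilon}$ with each $A_{i}$ a general (hence smooth) member, so that $(W,p^{*}A_{i})$ is canonical, and with $\sum_{i}r_{i}$ as small as one likes. Set $\Delta_{0}:=\sum_{i}r_{i}A_{i}+\epsilon N^{0}\sim_{\mathbb R}D$. Using that each $(W,p^{*}A_{i})$ is canonical and that $\mathrm{lct}(W,p^{*}N^{0})>0$, the standard discrepancy estimate shows $(W,p^{*}\Delta_{0})$ is terminal once $\sum_{i}r_{i}$ and $\epsilon$ are small enough. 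Moreover $v_{E_{j}}(\Delta_{0})=\epsilon\,v_{E_{j}}(N^{0})$ and $\mathrm{coeff}_{G}(\Delta_{0})=\epsilon\,\mathrm{coeff}_{G}(N^{0})$ for $q$-exceptional $E_{j}$ and $\psi$-exceptional $G$, so after shrinking $\epsilon$ these become smaller than every one of the finitely many positive numbers $a(E_{j};X)$ (for $E_{j}$ $p$-exceptional) and $\mathrm{coeff}_{E_{j}}(E)$ (for $E_{j}=p_{*}^{-1}G$). Put $\widetilde\Delta:=p^{*}\Delta_{0}$; then $\widetilde\Delta\sim_{\mathbb R}p^{*}D$, the pair $(W,\widetilde\Delta)$ is terminal, and since $q_{*}$ annihilates the $p$-exceptional part of $p^{*}\Delta_{0}$ (which is $q$-exceptional) we have $\Delta':=q_{*}\widetilde\Delta=\psi_{*}\Delta_{0}$.

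\emph{The discrepancy computation.} Writing $K_{W}=p^{*}K_{X}+F$ with $F\geq 0$ (as $X$ is terminal), combining $p^{*}(K_{X}+\Delta_{0})=q^{*}(K_{X'}+\Delta')+E$ with the decomposition $p^{*}\Delta_{0}=q_{*}^{-1}\Delta'+\sum_{G}\mathrm{coeff}_{G}(\Delta_{0})\,p_{*}^{-1}G+\Gamma$ ($\Gamma\geq 0$ the $p$-exceptional part) and with $K_{W}+q_{*}^{-1}\Delta'=q^{*}(K_{X'}+\Delta')+\sum_{j}a(E_{j};X',\Delta')E_{j}$, one obtains the identity of $q$-exceptional divisors
\[
\sum_{j}a(E_{j};X',\Delta')\,E_{j}=E+F-\Gamma-\sum_{G}\mathrm{coeff}_{G}(\Delta_{0})\,p_{*}^{-1}G .
\]
Comparing coefficients: if $E_{j}$ is $p$-exceptional then $a(E_{j};X',\Delta')=\mathrm{coeff}_{E_{j}}(E)+a(E_{j};X)-v_{E_{j}}(\Delta_{0})\geq a(E_{j};X)-\epsilon\,v_{E_{j}}(N^{0})>0$, while if $E_{j}=p_{*}^{-1}G$ for a $\psi$-exceptional $G$ then $a(E_{j};X',\Delta')=\mathrm{coeff}_{E_{j}}(E)-\mathrm{coeff}_{G}(\Delta_{0})\geq\mathrm{coeff}_{E_{j}}(E)-\epsilon\,\mathrm{coeff}_{G}(N^{0})>0$. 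Since $W$ may be taken to be a log resolution of $(X',\Delta')$ as well (the $A_{i}$ being general and $N^{0}$ fixed), terminality of $(X',\Delta')$ may be checked on the divisors $E_{j}$, and we conclude that $(X',\Delta')$ is terminal.

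\emph{Where the difficulty lies.} The heart of the matter is the displayed identity: along each $q$-exceptional divisor the discrepancy of $(X',q_{*}\widetilde\Delta)$ splits as a quantity which is positive for structural reasons, namely either $a(E_{j};X)>0$ from terminality of $X$ or $\mathrm{coeff}_{E_{j}}(E)>0$ from the condition $\mathrm{Supp}(E)\supseteq\{p_{*}^{-1}G\}$, minus the valuation $v_{E_{j}}(\Delta_{0})$, which we are free to make tiny. This is exactly why $\widetilde\Delta$ must be the pull-back of a divisor with very small coefficients, and it is the point where one genuinely uses both that $\psi$ is a contraction (so that every exceptional divisor is accounted for) and the negativity lemma. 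The only other technical input, that $(W,p^{*}\Delta_{0})$ is still terminal despite the exceptional components of $p^{*}\Delta_{0}$, is precisely what the ample perturbation of the big and nef class $D$ arranges.
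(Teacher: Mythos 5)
Your proof is correct and follows the same strategy as the paper's: perturb $D$ to an $\mathbb{R}$-linearly equivalent boundary whose pullback to $W$ has arbitrarily small coefficients along the $q$-exceptional divisors, and then read off terminality of the pushforward from the identity expressing $K_W+\widetilde\Delta-q^*(K_{X'}+\Delta')$ as a $q$-exceptional divisor with coefficients bounded below by the (positive) discrepancies of $X$ and the (positive) coefficients of the negativity divisor $E$, minus small error terms. The only cosmetic difference is that the paper first fixes a terminal $\Delta\sim_{\mathbb{R}}D$ on $X$ and then perturbs $p^*\Delta$ on $W$, whereas you build the small-coefficient $\Delta_0$ directly on $X$ from a Kodaira decomposition and take $\widetilde\Delta=p^*\Delta_0$; this makes explicit what the paper cites to [LTT16, Theorem 2.3], but the underlying computation is the same.
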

	
	\begin{proof}
		Since $D$ is big and nef, we can find $\Delta\sim_{\mathbb{R}}D$ such that $(X,\Delta)$ is terminal (see the proof of [LTT16, Theorem 2.3]). Let $p:W\longrightarrow X$ and $q:W\longrightarrow X'$ be a common log resolution
		\[\begin{tikzcd}
		& W \arrow[dl, "p"'] \arrow[dr, "q"] & \\
		X \arrow[rr,dashrightarrow, "\psi" ]& & X' \\
		\end{tikzcd}\] 
		Let $E_j$ be the $\psi$-exceptional divisors and $F_i$ the $p$-exceptional divisors. Note that the $q$-exceptional divisors are $F_i$ and the strict transforms $\widetilde{E}_j$ of the $\psi$-exceptional divisors.
		We have
		\[K_W+p_*^{-1}\Delta= p^*(K_X+\Delta)+\tsum_ia_iF_i\]
		where $a_i>0$. We may add $p$-exceptional divisors to obtain
		\[K_W+p^*\Delta= p^*(K_X+\Delta)+\tsum_ib_iF_i\]
		with $b_i>0$. Since $\psi$ is $K_X+\Delta$-negative, we may write
		\[p^*(K_X+\Delta)=q^*(K_{X'}+\Delta')+\tsum_jc_j\widetilde{E}_j+\tsum_id_iF_i\]
		where $\Delta'=\psi_*\Delta$ and $c_j>0$, $d_i\geq 0$. Therefore we have
		\[K_W+p^*\Delta=q^*(K_{X'}+\Delta')+\tsum_j\alpha_j\widetilde{E}_j+\tsum_i\beta_iF_i  \]
		with $\alpha_j$,$\beta_i>0$.
		As $p^*\Delta$ is big and nef, we may choose $\widetilde{\Delta}\sim_{\mathbb{R}}p^*\Delta$ with the coefficients of $q$-exceptional divisors in $\widetilde{\Delta}$ sufficiently small such that $(W,\widetilde{\Delta})$ is a simple normal crossing terminal pair and  
		\[K_W+q_*^{-1}q_*\widetilde{\Delta}=q^*(K_{X'}+q_*\widetilde{\Delta})+\tsum_j\alpha'_j\widetilde{E}_j+\tsum_i\beta'_iF_i \]
		with $\alpha'_j$,$\beta'_i>0$. Therefore $(X',q_*\widetilde{\Delta})$ is also terminal.

	\end{proof}

	We recall the definitions and results related to the BAB-conjecture.

	\begin{defn}Let $X$ be a normal projective variety and $\Delta$ an effective boundary $\mathbb{R}$-divisor such that $K_X+\Delta$ is $\mathbb{Q}$-Cartier. We say the $(X,\Delta)$ is $\epsilon$-log canonical (resp. $\epsilon$-klt) if for a resolution of singularities $\pi: \widetilde{X}\longrightarrow X$ with exceptional divisors $E_i$, we have $a(E_i,X,\Delta)\geq -1+\epsilon$ (resp.  $a(E_i,X,\Delta)> -1+\epsilon$) where the dicrepancies $a(E_i,X,\Delta)$ are defined by the equation
		\[K_{\widetilde{X}}+\pi^{-1}_*\Delta =\pi^*(K_X+\Delta)+a(E_i,X,\Delta)E_i.\]
	\end{defn}
	
	The following is the BAB-conjecture proved by Birkar.
	
	\begin{thm}(see [Bir16, Theorem 1.1])
		Let $n$ be a natural number and $\epsilon>0$ a real number. Then the set of projective varieties $X$ such that 
		
		\begin{itemize}
			\item[(1)] $X$ is of dimension $n$ with a boundary divsior $\Delta$ such that $(X,\Delta)$ is $\epsilon$-log canonical
			\item[(2)] $-(K_X+\Delta)$ is big and nef,
		\end{itemize}
		form a bounded family.
	\end{thm}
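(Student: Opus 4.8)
The statement is the Borisov--Alexeev--Borisov conjecture, and in the present paper it is used purely as a black box; a complete proof is the content of [Bir16] (together with the companion work on boundedness of complements), so what follows is only a high-level description of the strategy one would pursue.

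The plan is to extract three quantitative consequences of the hypotheses and then feed them into known boundedness criteria. First, one would establish \emph{boundedness of complements}: there is a natural number $N=N(n,\epsilon)$ so that every pair $(X,\Delta)$ as in the statement carries an effective divisor $\Gamma\geq \Delta$ with $N(K_X+\Gamma)\sim 0$ and $(X,\Gamma)$ log canonical, which reduces matters to bounding Fano-type varieties equipped with a fixed-index anticanonical section. Second, one would prove \emph{effective birationality}: there is $m=m(n,\epsilon)$ such that, after passing to a bounded birational model, $|-mK_X|$ defines a birational map; equivalently one bounds $\mathrm{vol}(-K_X)$ above by a constant $M(n,\epsilon)$, using volume comparisons of the type in Lemma 2.9 under birational modifications together with the $\epsilon$-log canonical hypothesis to control the singularities of general members of the anticanonical system. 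Third, once the volume of the big and nef divisor $-(K_X+\Delta)$ and the Cartier index (after a bounded modification) are both bounded, one would invoke the standard boundedness results for klt pairs carrying an ample divisor of bounded volume and bounded index to conclude.

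The main obstacle --- and the genuinely new input of [Bir16] --- is the second step: the uniform upper bound on the anticanonical volume, equivalently a uniform lower bound on the log canonical thresholds of anticanonical linear systems, for $\epsilon$-log canonical Fano varieties. One proves this by induction on $n$: run an MMP on a suitably chosen boundary, use adjunction onto a well-chosen divisor to descend to dimension $n-1$, and control the resulting fractional coefficients by means of the theory of generalized polarized pairs together with the boundedness of complements just established, so that the inductive hypothesis applies in dimension $n-1$. Orchestrating these ingredients --- generalized pairs, the ACC for log canonical thresholds ([HMX14]), special termination of flips, and the adjunction and connectedness formalism --- is where essentially all the difficulty resides; by comparison the concluding deduction of boundedness from a bound on the volume is routine.
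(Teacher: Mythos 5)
The paper does not prove this statement; it is quoted as Theorem 2.13 with a citation to [Bir16, Theorem 1.1] and used as a black box, which is exactly how you treat it. Your high-level sketch of Birkar's strategy (boundedness of complements, effective birationality and the anticanonical volume bound, then deducing boundedness from bounded volume and bounded index via induction and generalized pairs) is an accurate description of the cited work, so this matches the paper's approach.
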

	
	A consequence of the above theorem is the boundedness of anticanonical volumes.
	
	\begin{cor}(Weak BAB-conjecture)
		Let $n$ be a natural number and $\epsilon>0$ a real number. There exists a constant $M(n,\epsilon)$ such that, for any normal projective variety $X$ satisfying
		\begin{itemize}
			\item[(1)] $X$ is of dimension $n$ such that there is a boundary divsior $\Delta$ with $(X,\Delta)$ is $\epsilon$-klt and $K_X$ is $\mathbb{Q}$-Cartier.
			\item[(2)] $-(K_X+\Delta)$ is ample,
		\end{itemize}
		we have 
		\[\mathrm{vol}(-K_X)<M(n,\epsilon).\]
	\end{cor}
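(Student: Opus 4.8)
The plan is to deduce the bound from Birkar's theorem by promoting the statement ``the varieties $X$ form a bounded family'' to ``$\mathrm{vol}(-K_X)$ takes only finitely many values.'' Since $-(K_X+\Delta)$ is ample it is big and nef, and since $(X,\Delta)$ is $\epsilon$-klt it is $\epsilon$-log canonical; hence Theorem 2.13 applies, so there is a projective morphism $\pi\colon\mathcal X\to S$ with $S$ of finite type over $k$ such that every $X$ as in the statement is isomorphic to a closed fibre $\mathcal X_s$. It therefore suffices to bound $\mathrm{vol}(-K_{\mathcal X_s})$ as $s$ ranges over the set of $s\in S$ with $\mathcal X_s\cong X$ for some such $X$, and for this I would show this set is covered by finitely many locally closed subsets of $S$ on each of which $s\mapsto\mathrm{vol}(-K_{\mathcal X_s})$ is constant; then $M(n,\epsilon)$ can be taken larger than all the (finitely many) values.

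I would argue by Noetherian induction on $S$. After stratifying and treating one stratum at a time, we may assume $S$ is integral, the $s$ with $\mathcal X_s$ among our $X$ are dense in $S$, and $\pi$ is flat; then, since every such $\mathcal X_s$ is $\epsilon$-klt with $K_{\mathcal X_s}$ $\mathbb Q$-Cartier, semicontinuity of singularities under generisation shows $K_{\mathcal X/S}$ is $\mathbb Q$-Cartier over a dense open (which we may take to be $S$) and $\mathcal X_\eta$ is $\epsilon$-klt. Choose a log resolution $\rho\colon\mathcal Y\to\mathcal X$; by generic smoothness in characteristic $0$, after shrinking $S$ we may assume $\mathcal Y\to S$ is smooth and projective and that for every $s$ the map $\rho_s\colon\mathcal Y_s\to\mathcal X_s$ is a resolution with exceptional divisors $E_{i,s}=E_i\cap\mathcal Y_s$ and discrepancies $a_i>-1$ independent of $s$, so $K_{\mathcal Y_s}=\rho_s^*K_{\mathcal X_s}+\sum_i a_iE_{i,s}$. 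By invariance of volume under birational pull-back (Lemma 2.10),
\[\mathrm{vol}(-K_{\mathcal X_s})=\mathrm{vol}(\mathcal Y_s,\,\rho_s^*(-K_{\mathcal X_s}))=\mathrm{vol}\Big(\mathcal Y_s,\,-K_{\mathcal Y_s}+\sum_i a_iE_{i,s}\Big).\]
Since $\mathcal Y\to S$ is a smooth projective family over an integral base, $[-K_{\mathcal Y_s}]$ and the $[E_{i,s}]$ are flat sections of a local system on $S$, so the right-hand side is the volume of a fixed $\mathbb R$-linear combination of such sections; one then needs that this volume is constant on a dense open of $S$, after which Noetherian induction on the complement finishes.

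The main obstacle is exactly that last assertion: that the volume of a fixed (possibly irrational) divisor class is a constructible function of $s$ in a smooth projective family. This can be extracted from upper semicontinuity of $s\mapsto h^0(\mathcal Y_s,\mathcal O(\lfloor mD_s\rfloor))$ for each fixed $m$, together with the facts that the generic value of the volume is the minimum over $S$ and that over a dense open the Nakayama positive part of $D$ spreads out so that the generic volume is an intersection number, hence locally constant. Alternatively one can bypass the resolution by invoking that Birkar's boundedness also bounds the Cartier index of $K_X$ by some $r_0=r_0(n,\epsilon)$: then $\omega_{\mathcal X/S}^{[-r_0]}$ is, after shrinking $S$, a relative line bundle $\mathcal M$ with $\mathcal M|_{\mathcal X_s}=\mathcal O_{\mathcal X_s}(-r_0K_{\mathcal X_s})$, so $j\mapsto h^0(\mathcal X_s,\mathcal M^{\otimes j})$ is a polynomial for $j\gg 0$ whose coefficients are locally constant in $s$ (Hilbert polynomials are locally constant in flat families), whence $\mathrm{vol}(-K_{\mathcal X_s})=r_0^{-n}\cdot n!\cdot(\text{leading coefficient})$ is locally constant. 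Either way, finitely many strata give finitely many values of $\mathrm{vol}(-K_X)$, and we take $M(n,\epsilon)$ to exceed all of them.
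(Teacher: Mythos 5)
The paper itself gives no proof of this corollary; it is stated as a known consequence of Theorem 2.13, and indeed the volume bound is essentially Birkar's own earlier result (proved as an \emph{input} to full BAB, not deduced from it). So you are filling a gap the paper leaves to the reader, and your overall plan --- boundedness of the family implies $s\mapsto\mathrm{vol}(-K_{\mathcal X_s})$ is constructible, hence takes finitely many values --- is a reasonable route. But the two ways you propose to establish constructibility both have problems, the second one fatally.

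The ``alternative'' argument is wrong as stated. For a line bundle $\mathcal M_s$ on $\mathcal X_s$ that is big but \emph{not} nef, the function $j\mapsto h^0(\mathcal X_s,\mathcal M_s^{\otimes j})$ is not eventually polynomial; what is eventually polynomial (by Snapper) is the Euler characteristic $\chi(\mathcal X_s,\mathcal M_s^{\otimes j})$, and the two agree only once higher cohomology vanishes, which requires nefness (Fujita/Kawamata--Viehweg). Here $-K_X=-(K_X+\Delta)+\Delta$ is big (ample plus effective) but has no reason to be nef, so ``Hilbert polynomials are locally constant'' does not apply to $h^0$; the whole difficulty of the volume function is exactly the discrepancy between $h^0$ and $\chi$. (Cutkosky's examples of Cartier big divisors with irrational volume show starkly that $h^0(jD)$ cannot be eventually a polynomial in general.) Your first alternative --- semicontinuity of $h^0$ together with ``the Nakayama positive part spreads out over a dense open'' --- is closer to a real argument, but that spreading-out is a serious claim in its own right and is roughly where the work of the proof actually lies; as written it is an assertion, not a proof.

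A more robust route, still elementary granted Theorem 2.13, avoids constructibility of the volume entirely. After the Noetherian stratification you set up, one may arrange that $\pi$ is flat, that $\omega_{\mathcal X/S}^{[r_0]}$ is a line bundle for some $r_0=r_0(n,\epsilon)$ bounding the Cartier index of $K_X$ (this boundedness is also part of what Birkar proves), and that there is a relatively very ample $\mathcal A$. Then both $\mathcal A_s^n$ and $(-r_0K_{\mathcal X_s})\cdot\mathcal A_s^{n-1}$ are locally constant in $s$, hence bounded by constants depending only on $(n,\epsilon)$. Now a standard slicing argument bounds volume directly: writing $D=-r_0K_X$ and $A=\mathcal A_s$, the exact sequence $0\to\mathcal O_X(mD-kA-A)\to\mathcal O_X(mD-kA)\to\mathcal O_H(mD-kA)\to 0$ for a general $H\in|A|$ gives $h^0(X,mD)\le\sum_{k\ge 0}h^0\bigl(H,(mD-kA)|_H\bigr)$, where the sum has at most $m\,(D\cdot A^{n-1})/A^n+1$ nonzero terms since $h^0(X,mD-kA)=0$ once $(mD-kA)\cdot A^{n-1}<0$; inducting on dimension (base case a curve, where $h^0\le\deg+1$) bounds $h^0(X,mD)$ for every $m$ purely in terms of $n$, $A^n$ and $D\cdot A^{n-1}$, and hence bounds $\mathrm{vol}(-K_X)=r_0^{-n}\mathrm{vol}(D)$ by a constant $M(n,\epsilon)$. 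This bypasses the semicontinuity/Nakayama issues altogether and is the argument I would suggest you use instead.
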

	
	As a consequence of the Weak BAB-conjecture we obtain the following result. It shows that the degrees of all adjoint-rigid covers preserving the $a$-constant are bounded by a constant.

	\begin{prop}
		Let $X$ be a smooth uniruled variety and $L$ a big and nef $\mathbb{Q}$-divisor. Then there exists a constant $M>0$ such that, if $f:Y \longrightarrow X$ is an adjoint-rigid cover preserving the $a$-constant, then $\mathrm{deg}(f)<M$.
	\end{prop}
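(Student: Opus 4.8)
The plan is to bound $\deg(f)$ by bounding a volume, using the Weak BAB conjecture (Corollary 2.14). Write $a=a(X,L)$, which is positive since $X$ is uniruled ([BDPP13]), and $n=\dim X$; note that $L^n>0$ is a fixed positive number. Let $f\colon Y\to X$ be an adjoint-rigid cover preserving the $a$-constant. Since conditions (1)--(3) of Definition 2.6 are stable under resolution, I may assume that $Y$ is smooth, hence $\mathbb{Q}$-factorial and terminal, that $f^*L$ is big and nef, that $a(Y,f^*L)=a$, and that $\deg(f)\cdot L^n=(f^*L)^n=\mathrm{vol}(Y,f^*L)$ (by the projection formula and nefness of $f^*L$).

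The first step is to produce a good minimal model of the adjoint pair. Since $f^*L$ is big and nef, I choose $\Delta_Y\sim_{\mathbb{R}}af^*L$ with $(Y,\Delta_Y)$ terminal (as in the proof of [LTT16, Theorem 2.3]); then $K_Y+\Delta_Y\equiv K_Y+af^*L$ is pseudo-effective because $a$ is the Fujita threshold. By [BCHM10] I run a $(K_Y+\Delta_Y)$-MMP with scaling; since $\Delta_Y$ is big and $K_Y+\Delta_Y$ is pseudo-effective, it terminates with a minimal model $\mu\colon Y\dashrightarrow Y'$. Using Lemma 2.11 I pass to a common resolution $p\colon W\to Y$, $q\colon W\to Y'$ and arrange that $(Y',\Delta')$ is terminal with $\Delta'\sim_{\mathbb{R}}\mu_*\Delta_Y$; then $K_{Y'}+\Delta'$ is nef, and semiample by [BCHM10, Corollary 3.9.2] since $\Delta'$ is big. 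The hypothesis $\kappa(K_Y+af^*L)=0$ is preserved along this MMP --- this is essentially the assertion that adjoint-rigidity is a birational invariant, which one checks by the same MMP-plus-semiampleness argument --- so $\kappa(K_{Y'}+\Delta')=0$ and therefore $K_{Y'}+\Delta'\sim_{\mathbb{R}}0$, i.e. $-K_{Y'}\sim_{\mathbb{R}}\Delta'$.

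Because volume depends only on the numerical class and is non-decreasing under push-forward by a birational contraction (apply Lemma 2.9 on the resolution $W$), I obtain
\[\deg(f)\cdot L^n=\mathrm{vol}(Y,f^*L)=\frac{1}{a^n}\,\mathrm{vol}(Y,\Delta_Y)\leq\frac{1}{a^n}\,\mathrm{vol}(Y',\Delta')=\frac{1}{a^n}\,\mathrm{vol}(Y',-K_{Y'}).\]
Thus it suffices to bound $\mathrm{vol}(Y',-K_{Y'})$ by a constant depending only on $n$. Here $Y'$ is terminal and $-K_{Y'}\sim_{\mathbb{R}}\Delta'$ is big and effective; writing $\Delta'\sim_{\mathbb{R}}A+E$ with $A$ ample and $E\geq 0$ and perturbing the boundary shows that $Y'$ is of Fano type. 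I then run a $K_{Y'}$-MMP: it terminates since $Y'$ is of Fano type, it preserves terminality, and it can only increase $\mathrm{vol}(-K)$ (Lemma 2.9); since $-K_{Y'}$ is big it cannot end with a minimal model, so it ends with a Mori fibre space $\bar{Y}\to Z$ on which $-K_{\bar Y}$ is still big. If $Z$ is a point, then $\bar Y$ is a terminal (hence $\tfrac12$-klt) Fano variety, and Corollary 2.14 gives $\mathrm{vol}(-K_{\bar Y})<M(n,\tfrac12)$; hence $\deg(f)<M:=M(n,\tfrac12)/(a^nL^n)+1$, a constant depending only on $X$ and $L$.

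The main obstacle is the case in which the base $Z$ has positive dimension: there $-K_{\bar Y}$ is relatively ample and big but not nef, so Corollary 2.14 does not apply directly. I would dispatch this by induction on dimension: a general fibre of $\bar Y\to Z$ is a terminal Fano variety of smaller dimension whose anticanonical volume is bounded by Corollary 2.14, while the base $Z$ of a contraction of a Fano type variety is again of Fano type (with a natural klt boundary), so the inductive hypothesis applies to it; combining these two bounds controls $\mathrm{vol}(-K_{\bar Y})$, hence $\mathrm{vol}(Y',-K_{Y'})$, in terms of $n$ alone. (Equivalently, one invokes the boundedness of anticanonical volumes of $\varepsilon$-lc Fano type $n$-folds, a known consequence of [Bir16].) With this, the proof is complete.
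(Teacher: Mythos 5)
Your proof follows the paper's plan faithfully up to the inequality
\[
\deg(f)\cdot a^n\,\mathrm{vol}(L)\;=\;\mathrm{vol}(Y,\Delta_Y)\;\leq\;\mathrm{vol}(Y',\Delta')\;=\;\mathrm{vol}(Y',-K_{Y'}),
\]
with $Y'$ a $\mathbb{Q}$-factorial terminal minimal model of $(Y,\Delta_Y)$ and $K_{Y'}+\Delta'\equiv 0$; this is exactly the paper's reduction. The divergence is in how you bound $\mathrm{vol}(-K_{Y'})$, and here you take an unnecessary detour that introduces a genuine gap. You observe that $Y'$ is of Fano type, then run a $K_{Y'}$-MMP to a Mori fibre space $\bar{Y}\to Z$ and split into cases on $\dim Z$. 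When $\dim Z>0$ you propose an induction on dimension, asserting that bounded anticanonical volume of the general fibre plus the inductive bound on the base ``controls $\mathrm{vol}(-K_{\bar Y})$''. That step is not justified: the anticanonical volume of a Mori fibre space is not a simple function of the fibre and base anticanonical volumes, and turning that idea into a bound requires nontrivial control of the relative anticanonical divisor. As written, this is a hole in your primary line of argument.

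The detour is avoidable: the paper applies Corollary~2.14 directly. Writing $\Delta'\equiv A+E$ with $A$ ample and $E\geq 0$, for $0<t\ll 1$ the pair $(Y',(1-t)\Delta'+tE)$ is still terminal (hence $\tfrac12$-klt), and
\[
K_{Y'}+(1-t)\Delta'+tE\;\equiv\;-tA
\]
is anti-ample. These are precisely the hypotheses of Corollary~2.14 (note that its conclusion bounds $\mathrm{vol}(-K_X)$, not $\mathrm{vol}\bigl(-(K_X+\Delta)\bigr)$, so an ample boundary on the \emph{fixed} variety $Y'$ suffices). This gives $\mathrm{vol}(-K_{Y'})<M(n,\tfrac12)$ at once, with no second MMP and no Mori fibre space analysis. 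Your parenthetical fallback --- boundedness of anticanonical volumes of $\epsilon$-lc Fano type $n$-folds, a consequence of [Bir16] --- is correct and would close the argument, but it is in fact nothing more than Corollary~2.14 read in this way; you cite it as an external black box rather than recognizing that the corollary already in hand applies to the perturbed pair on $Y'$.
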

	
	\begin{proof}
		By Lemma 2.11, we may replace $Y$ by a resolution to assume that there exists $\Delta\sim af^*L$ with $(Y,\Delta)$ terminal and we have a morphism $\psi:Y\longrightarrow Y'$ to a minimal model $(Y',\Delta')$ with $\mathbb{Q}$-factorial terminal singularities. Now $\kappa(K_Y+\Delta)=0$ implies that $\kappa(K_{Y'}+\Delta')=0$. As $K_{Y'}+\Delta'$ is semi-ample ([BCHM, Corollary 3.9.2]), we have $K_{Y'}+\Delta'\equiv 0$. Since $\Delta'$ is big, we can wrtie $\Delta'\equiv A+E$ where $A$ is ample and $E$ is effective. Now for $0<t\ll 1$, $(Y',(1-t)\Delta'+tE)$ is terminal and 
		\[K_{Y'}+(1-t)\Delta'+tE\equiv -tA\]
		is anti-ample. Therefore $(Y',(1-t)\Delta'+tE)$ is terminal log Fano. In particular, it is $\epsilon$-klt for $\epsilon=\frac{1}{2}$. Therefore by the Weak BAB conjecture (Corollary 2.14), there exists $M>0$ such that \[\mathrm{vol}(\Delta')=\mathrm{vol}(-K_{Y'}) <M.\] Since $f:Y\longrightarrow X$ is generically finite, we have $\mathrm{vol}(af^*L)=a^n\mathrm{deg}(f)\mathrm{vol}(L)$. Now we have the following inequality
		\[a^n\mathrm{deg}(f)\mathrm{vol}(L)=\mathrm{vol}(\Delta)\leq \mathrm{vol}(\Delta')<M.\]
		Therefore the degree of $f$ is bounded.
		
	\end{proof}
	
	\subsection{Twists}
	Let us assume that the ground field is a number field $F$. Let $X$ be a smooth projective variety over $F$ and $L$ a big and nef $\mathbb{Q}$-divisor on $X$.\\
	
	Let $f:Y\longrightarrow X$ be a generically finite cover defined over $F$. A twist of $f:Y\longrightarrow X$ is a generically finite cover $f':Y'\longrightarrow X$ such that, after base change to the algebraic closure $\overline{F}$, we have an isomorphism $g:\overline{Y}\xrightarrow{\sim} Y'$ with $\overline{f}=\overline{f'}\circ g$.
	\[\begin{tikzcd}
	\overline{Y}\arrow[rr, "\sim", "g"'] \arrow[dr, "\overline{f}"'] & & \overline{Y'} \arrow[dl, "\overline{f'}"]\\
	& \overline{X} &\\
	\end{tikzcd}\]
	All the twists of $f:Y\longrightarrow X$ is parametrized by the Galois cohomology of $\mathrm{Aut}(Y/X)$. Precisely, there is a bijection between the set of isomorphism classes of twists of $f$ and the Galois cohomology group
	$H^1(\mathrm{Gal}(F),\mathrm{Aut}(Y/X))$. In view of Conjecture 1.2, even if we know the finiteness of adjoint-rigid $a$-covers over $\overline{F}$, the corresponding finiteness statement might not hold over $F$ itself due to the presence of twists. However the following result shows that, the rational points contributed by all twists of $f$, satisfying the hypothesis of Conjecture 1.2, are contained in a thin subset.
	
	\begin{thm}(see [LT17, Theorem 1.10]) Let $X$ be a smooth projective variety over a number field $F$ satisfying $\rho(\overline{X})=\rho(X)$ and let $L$ be a nef and big $\mathbb{Q}$-divisor on $X$. Suppose $f:Y\longrightarrow X$ is a generically finite $F$-cover from a smooth projective variety $Y$, satisfying $\kappa(K_Y+a(X,L)f^*L)=0$. If we vary $\sigma \in
		H^1(\mathrm{Gal}(F),\mathrm{Aut}(Y/X))$ such that the corresponding twist $f^{\sigma}:Y^{\sigma}
		\longrightarrow X$ satisfies
		\[(a(Y,f^*L),b(F,Y^\sigma,(f^\sigma)^*L))>(a(X,L),b(
		F,X,L))\]
		in the lexicographic order, then the set 
		\[\cup_\sigma f^\sigma(Y^\sigma(F))\subset X(F)\]
		is contained in a thin subset of $X(F)$.

	\end{thm}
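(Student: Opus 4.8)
The plan is to reduce the statement to the behaviour of the $b$-constant under twisting, to compute that behaviour in terms of a Galois action on a fixed finite-rank lattice attached to the adjoint-rigid structure of $f$, and then to show that the ``bad'' twists factor over $F$ through finitely many fixed covers of $X$. First I would note that $G:=\mathrm{Aut}(\overline Y/\overline X)$ is finite, that the twists $f^\sigma$ are classified by $H^1(\mathrm{Gal}(F),G)$, and that by Lemma 2.4 together with the hypothesis $\kappa(K_Y+a(X,L)f^*L)=0$ one has $a(Y,f^*L)=a(X,L)=:a$ (otherwise $K_Y+a(X,L)f^*L$ would contain a big divisor, forcing $\kappa>0$). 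Since the $a$-constant is invariant under base change and $\kappa$ is a birational-geometric invariant, and $\overline{Y^\sigma}\cong\overline Y$ compatibly with the maps to $\overline X$, every twist satisfies $a(Y^\sigma,(f^\sigma)^*L)=a$ and $\kappa(K_{Y^\sigma}+a(f^\sigma)^*L)=0$. Hence the first coordinates of the lexicographic inequality always coincide, and the condition on $\sigma$ reduces to
\[
b(F,Y^\sigma,(f^\sigma)^*L)>b(F,X,L).
\]

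The geometric core is a computation of $b(F,Y^\sigma,(f^\sigma)^*L)$. Because $\kappa(K_{\overline Y}+a\overline f^*L)=0$ there is a unique effective divisor $D\equiv K_{\overline Y}+a\overline f^*L$; let $S\subseteq\mathrm{NS}(\overline Y)_{\mathbb R}$ be the span of the minimal supported face of $\Lambda_{\mathrm{eff}}(\overline Y)$ through $[D]$. This $S$ is stable under $G$ and under every twisted Galois action, and writing $K_{\overline Y}+a\overline f^*L=\overline f^*(K_X+aL)+R$ with $R\ge0$ and using $\kappa(K_X+aL)=0$ one controls $S\cap\overline f^*\mathrm{NS}(\overline X)_{\mathbb R}$ in terms of the face of $(X,L)$; here the hypothesis $\rho(\overline X)=\rho(X)$ enters, through the triviality of the $G$- and Galois-actions on $\overline f^*\mathrm{NS}(\overline X)_{\mathbb R}$. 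Combining these with the semisimplicity of finite group actions on real vector spaces, I would show that, setting $U:=\mathrm{NS}(\overline Y)_{\mathbb R}/S$ and letting $\Gamma\subseteq GL(U)$ be the finite group generated by $G$ and by the untwisted Galois action,
\[
b(F,Y^\sigma,(f^\sigma)^*L)=\dim U^{\Gamma_\sigma},\qquad b(F,X,L)=\dim U^{\Gamma},
\]
where $\Gamma_\sigma\subseteq\Gamma$ is the image of the $\sigma$-twisted Galois action. Consequently $b$ strictly increases for $\sigma$ exactly when $\Gamma_\sigma$ has strictly more invariants on $U$ than the full group $\Gamma$; in particular this can happen only when $\Gamma_\sigma$ is a proper subgroup of $\Gamma$.

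There are finitely many conjugacy classes of proper subgroups $\Gamma'\subsetneq\Gamma$ with $U^{\Gamma'}\supsetneq U^{\Gamma}$, say $\Gamma_1,\dots,\Gamma_r$. For each of these one wants to see that the twists $\sigma$ with $b$-jump — those with $\Gamma_\sigma$ subconjugate to some $\Gamma_j$ — force $f^\sigma:Y^\sigma\to X$ to factor over $F$ through one of finitely many \emph{fixed} generically finite covers $W_1,\dots,W_r$ of $X$, each of degree $>1$; this is the step requiring careful bookkeeping with the Galois action on $G$ (and, when the ``arithmetic'' part $\Gamma/(\text{image of }G)$ is nontrivial, a Hilbert-irreducibility/Chebotarev argument applied to an auxiliary $F$-cover of $X$). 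Granting it,
\[
\bigcup_{\sigma:\,b\text{-jump}}f^\sigma(Y^\sigma(F))\ \subseteq\ \bigcup_{j=1}^r g_j(W_j(F)),
\]
and since each $g_j:W_j\to X$ is generically finite of degree $>1$, hence admits no rational section, the right-hand side is a thin subset of $X(F)$. This would prove the theorem.

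The step I expect to be the main obstacle is the geometric computation of the second paragraph: pinning down $S$ canonically from adjoint-rigidity, controlling $S\cap\overline f^*\mathrm{NS}(\overline X)_{\mathbb R}$, and above all showing that $\rho(\overline X)=\rho(X)$ is precisely what makes $b(F,X,L)$ equal to the ``generic'' value $\dim U^{\Gamma}$, so that a jump in $b$ is genuinely equivalent to a deficiency of the twisted monodromy $\Gamma_\sigma$. A secondary difficulty, handled by passing to a suitable Galois closure over $F$, is the descent bookkeeping in the last step — producing the fixed covers $W_j$ (or the auxiliary cover for the Hilbert-irreducibility argument) — together with the fact that $G$, hence $H^1(F,G)$, may carry a nontrivial Galois action, so that one is working with nonabelian cohomology.
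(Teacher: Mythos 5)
This theorem is not proved in the paper: Theorem~2.16 is quoted directly from [LT17, Theorem~1.10] and invoked as a black box in the proof of Corollary~1.3. There is therefore no in-paper proof to compare your proposal against; what you have written is a reconstruction of Lehmann--Tanimoto's argument, and should be checked against that source rather than against anything here.

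On its own merits, the opening reduction is correct: $a$ and $\kappa$ are invariant under base change to $\overline F$, $\overline{Y^\sigma}\cong\overline Y$ compatibly over $\overline X$, and Lemma~2.4 together with $\kappa(K_Y+a(X,L)f^*L)=0$ does force $a(Y,f^*L)=a(X,L)=:a$ (if the inequality of Lemma~2.4 were strict, $K_Y+af^*L$ would be pseudo-effective plus big, hence big, contradicting $\kappa=0$); so the lexicographic condition collapses, uniformly in $\sigma$, to a strict inequality of $b$-constants. Modeling $b(F,Y^\sigma,(f^\sigma)^*L)$ as $\dim U^{\Gamma_\sigma}$ for a finite group $\Gamma$ acting on the quotient $U=\mathrm{NS}(\overline Y)_{\mathbb R}/S$ by the minimal supported face, and tying $b(F,X,L)$ to $\dim U^{\Gamma}$ via $\rho(\overline X)=\rho(X)$, is the right framework, and you are right that this identification (Galois-stability of $S$, and the interplay of $G$-invariants with $\overline f^*\mathrm{NS}(\overline X)$) is the serious geometric input rather than a formality.

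Where the sketch goes wrong is the final thinness step. The claim that a $b$-jumping twist $f^\sigma\colon Y^\sigma\to X$ ``factors over $F$ through one of finitely many fixed covers $W_j$'' is not a mechanism that can be made literal: twisting changes the $F$-form of $Y$ itself, not its position over a fixed intermediate $F$-cover, and in general $f^\sigma$ does not dominate any fixed $W_j$. Your parenthetical Hilbert-irreducibility hedge is in fact the load-bearing part of the argument. The correct shape is: fix one $F$-Galois cover $Z\to X$ whose geometric Galois group surjects onto $\Gamma$ (an enlargement of the Galois closure that also records the monodromy on $\mathrm{NS}(\overline Y)$); assign to each $x\in X(F)$ off the branch locus the decomposition image $\Gamma_x\subseteq\Gamma$, defined up to conjugacy; show that $x\in f^\sigma(Y^\sigma(F))$ for some $\sigma$ with a $b$-jump forces $\Gamma_x$ to be subconjugate to one of the finitely many ``deficient'' subgroups $\Gamma_j$; and conclude by Hilbert's irreducibility theorem for $Z\to X$ that the set of such $x$ is thin. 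The sets you denote $\pi_j(W_j(F))$ then arise with $W_j=Z/\Gamma_j$ as the components of that thin set, but they enter through the decomposition-group condition at $x$, not through a factorization of the morphism $f^\sigma$.
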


	\section{Finiteness and thinness}
	In this section we prove the main results. \\
	
	{\bf Proof of Theorem 1.1.} Let $X$ be a smooth uniruled variety of dimension $n$ and $L$ a big and nef $\mathbb{Q}$-divisor on $X$. Suppose $a(X,L)=a$. We need to show that, upto birational equivalence, there exist finitely many varieties $Y$ that admit a morphism $f:Y\longrightarrow X$ which is an adjoint-rigid cover preserving the $a$-constant. The statement is obvious if $X$ is a curve. We assume that $n\geq 2$. By passing to a resolution it is enough to show that, upto birational equivalence, there exist finitely many smooth varieties $Y$ with  a morphism $f:Y\longrightarrow X$ which is an adjoint-rigid cover preserving the $a$-constant. By Proposition 2.15, we know that there exists a constant $M>0$ such that $\mathrm{deg}(f)<M$ for any adjoint-rigid cover preserving the $a$-constant $f:Y\longrightarrow X$. Now, for an open $U\subset X$, there are finitely many \'{e}tale covers (upto isomorphism) of $U$ of a given degree $d$. Hence it is enough to show that there is a proper closed subset $V\subsetneq X$, such that if  $f:Y\longrightarrow X$ is an adjoint-rigid cover preserving the $a$-constant and $Y$ is smooth, the branch locus of $f$ is contained in $V$.\\

	Suppose $f:Y\longrightarrow X$ is an adjoint-rigid cover preserving the $a$-constant and $Y$ is smooth. Let $Y\xrightarrow{\pi} \overline{Y} \xrightarrow{\overline{f}} X$ be the Stein factorization of $f$. Let $B\subset X$ be a component of the branch divisor of $\overline{f}$. Note that, by the Zariski-Nagata purity theorem, the branch locus is a divisor. Let $\tsum_jr_j\overline{R}_j\subset \overline{Y}$ be the ramification divisor, i.e. $K_{\overline{Y}}=\overline{f}^*K_X+\tsum_jr_j\overline{R}_j$. Let $\overline{R}\subset \overline{Y}$ be a component of the ramification divisor mapping to $B$ and $R\subset Y$ be the strict transform $\pi^{-1}_*(\overline{R})$.\\

	We have the following equation
	\[K_Y+af^*L\equiv f^*(K_X+aL)+\pi^{-1}_*(\tsum_ir_i\overline{R}_i)+\tsum_ia_iE_i\]
	where $a_i>0$. Note that, as $K_X+aL$ is pseudo-effective and $L$ is big and nef, by non-vanishing ([BCHM10, Theorem D]) we have 
	\[K_X+aL\sim_{\mathbb{R}}D\geq 0.\]
	Therefore we have 
	\[K_Y+af^*L\equiv \tsum_jc_jF_j\geq 0
	\]
	
	Now, we may find a $\Delta \equiv af^*L$ such that, $(Y,\Delta)$ is terminal. As $K_Y+\Delta$ is pseudo-effective, we can run a $K_Y+\Delta$-MMP 
	\[\psi: (Y,\Delta)\dashrightarrow (Y_1,\Delta_1)\dashrightarrow \cdots \dashrightarrow (Y_m,\Delta_m)=(Y',\Delta')\]
	to obtain a $K_Y+\Delta$-minimal model $(Y',\Delta')$.
	Since $\kappa(K_Y+\Delta)=0$, we have $K_{Y'}+\Delta'\equiv 0$. Hence the $K_Y+\Delta$-MMP contracts all components of the divisor $\tsum_jc_jF_j$. As $R=F_j$ for some $j$, Corollary 2.8 implies that $a(R,af^*L|_R)>1$ and hence 
	\[a(B,L|_B)>a=a(X,L).\]
	Therefore, by Theorem 2.3, there exists a proper closed subset $V\subsetneq X$ such that $B\subset V'=V\cup \mathbf{B}_{+}(L)$. Then, for any adjoint-rigid cover preserving the $a$-constant $f:Y\longrightarrow X$ with $Y$ smooth, the branch locus of $f$ is contained in $V'$. Therefore we have the desired conclusion.
	\\
	\qed
	\\
	
	{\bf Proof of Corollary 1.3.} We have a smooth uniruled variety $X$ over a number field $F$ such that $\rho(\overline{X})=\rho(X)$ and $\kappa(K_X+a(X,L)L)=0$ and $L$
	is a big and nef $\mathbb{Q}$-divisor on $X$. Let $f:Y\longrightarrow X$ be a generically finite $F$-cover such that $\kappa(K_Y+a(Y,f^*L)f^*L)=0$ and \[(a(Y,f^*L),b(F,Y,f^*L))>(a(X,L),b(F,X,L)).\]
	It is enough to show that the rational points contributed by the Stein factorization of $f:Y\longrightarrow X$ are contained in a fixed thin set.
	Note that, by Lemma 2.4, $a(Y,f^*L)=a(X,L)$. Therefore, the morphism $\overline{f}:\overline{Y}\longrightarrow \overline{X}$, obtained by base change to the algebraic closure of $F$, is an adjoint-rigid cover preserving the $a$-constant. Hence, by Theorem 1.1, the Stein factorizations of all such morphisms $\overline{f}:\overline{Y}\longrightarrow \overline{X}$  vary in a finite set $\mathcal{S}$. Hence we need to consider rational points contributed by twists of finitely many such Stein factorizations. So we may replace $Y$ by its Stein factorization. Further, by passing to a resolution of singularities, we may assume that $\mathrm{Bir}(\overline{Y}/\overline{X})=\mathrm{Aut}(\overline{Y}/\overline{X})$ (see the proof of [LT17, Theorem 1.10]). By applying Theorem 2.16 to each $f\in \mathcal{S}$, there is a thin subset $Z_f\subset X$ such that $\cup_\sigma f^\sigma(Y^\sigma(F))\subset Z_f$, where $\sigma$ varies over all the twists of $f$. Therefore we have
	\[\cup_ff(Y(F))=\cup_{f\in \mathcal{S}}f(Y(F)\subset \cup_{f\in \mathcal{S}} Z_f=Z\]
	where $Z\subset X$ is a thin subset and the union is taken over all the morphisms $f:Y\longrightarrow X$ satisfying the hypothesis of Corollary 1.3.\\
	\qed

	\bigskip
	
	\noindent  Princeton University, Princeton NJ 08544-1000
	
	{\begin{verbatim} 
		akashs@math.princeton.edu
		\end{verbatim}}

\end{document}